\documentclass[12pt]{article}
\usepackage[utf8]{inputenc}
\usepackage{amsmath, amssymb, amsthm,xcolor}
\usepackage{geometry}
\usepackage{enumerate}
\usepackage{cite}
\usepackage{graphicx}
\newtheorem{thm}{Theorem}[section]
\newtheorem{lemma}[thm]{Lemma}
\newtheorem{corollary}[thm]{Corollary}

\newcommand{\abs}[1]{\left|#1\right|}
\newcommand{\pa}[1]{\left(#1\right)}
\newcommand{\br}[1]{\left[#1\right]}
\newcommand{\cb}[1]{\left\{#1\right\}}

\newcommand{\prescript}[3]{\phantom{}_{#1}{#2}_{#3}}

\title{Reciprocal Polynomials with Zeros on the Unit Circle and Derivatives of Chebyshev Polynomials of the Second Kind}
\author{Dmitriy Dmitrishin, Daniel Gray and Alexander Stokolos}

\begin{document}
\date{}
\maketitle

\begin{center}
\textit{In Memoriam: We dedicate this manuscript to Konstantin Oskolkov, whose 80th birthday would have been  
February 17th, 2026.}
\end{center}

\begin{abstract}
In this article, we consider the reciprocal antisymmetric polynomial
\begin{displaymath}
P(z) = \sum_{j = 0}^{s}(-1)^j\gamma_j\pa{z^j - z^{N + s + 1 - j}}, \ \gamma_0 = 1.
\end{displaymath}
It is shown that if all the zeros of $P(z)$ are located on the unit circle, that
$\displaystyle\abs{\gamma_j} \leq {s \choose j}\pa{{N + s + 1 \choose j}}^{-1}$, $j = 1,\ldots,s$; moreover, these estimates cannot be improved in the general case. Factorization formulas for extremal polynomials are given:
\begin{eqnarray*}
\lefteqn{\sum_{j = 0}^{s}(-1)^j{s \choose j}\pa{{N + s + 1 \choose j}}^{-1}\pa{z^j - z^{N + s + 1 - j}}} \\
&=& (1 - z)^{2s + 1} \prod_{j = 1}^{\br{\frac{N - s}{2}}} \br{z^2 + 1 + 2z\pa{1 - 2(\nu_j)^2}}
\begin{cases}
(1 + z), & N - s \mbox{ is odd} \\
1, & N - s \mbox{ is even}
\end{cases}
\end{eqnarray*}
where
$\cb{\nu_j}_{j = 1}^{\br{\frac{N - s}{2}}}$
is the set of positive zeros of the polynomial
$U_N^{(s)}(z)$ given\\
$\displaystyle U_N(z) = \sum_{j = 0}^{\br{\frac{N}{2}}} (-1)^j \frac{(N - j)!}{j!(N - 2j)!}(2z)^{N - 2j}$ are the Chebyshev Polynomials of the Second Kind and $U_N^{(s)}(z)$ is the $s$th derivative of $U_N(z)$. As an application of the results, formulas were obtained expressing the derivatives of Chebyshev polynomials of the second kind through linear combinations of Chebyshev polynomials of the second kind:
\begin{displaymath}
\frac{2^s}{s!}(1 - z^2)^sU_N^{(s)}(z) = (-1)^s \sum_{j = 0}^{s}(-1)^j{N-j \choose N-s} {N+s+1 \choose j} U_{N + s - 2j}(z).
\end{displaymath}
\end{abstract}

\section{Introduction}
The presented article is an announced continuation of the work\cite{ref1}, where the following polynomials were considered
$p(z) = 1 + \kappa(z + z^{N-1}) + z^N$ and $q(z) = 1 + \kappa(z - z^{N - 1}) - z^N$,
and for which, in particular, the following properties are established:
\begin{itemize}
\item all zeros of the quadrinomial $p(z)$ lie on the unit circle if and only if the inequalities are satisfied
$-1 \leq \kappa \leq \begin{cases}
1, & N \mbox{ is even} \\
\frac{N}{N -2}, & N \mbox{ is odd}
\end{cases}$;
\item for quadrinomial $q(z)$, the corresponding inequalities are
$-\frac{N}{N-2} \leq \kappa \leq \begin{cases}
1, & N \mbox{ is odd} \\
\frac{N}{N -2}, & N \mbox{ is even}
\end{cases}$;
\item let
$\cb{\nu_j}_{j = 1}^{\br{\frac{N - 3}{2}}}$ be the set of positive roots of the equation
$U'_{N - 2}(x) = 0$ and let $\gamma_j = 1 - 2(\nu_j)^2$;
also, let $\cb{\mu_j}_{j = 1}^{\br{\frac{N - 3}{2}}}$ be the set of positive roots of the equation $U_{N - 2}(x) = 0$ and let $\beta_j = 1 - 2(\mu_j)^2$ when $N$ is odd; then the following factorization formulas are valid for $q(z)$ (for $p(z)$ there are similar formulas):
\begin{enumerate}[\label=i)]
\item $\kappa = -\frac{N}{N - 2}$, $N$ odd
\begin{displaymath}
q(z) = (1 - z)^3\prod_{j = 1}^{\frac{N - 3}{2}}\br{1 + z^2 + 2z\gamma_j};
\end{displaymath}
\item $\kappa = -\frac{N}{N - 2}$, $N$ even
\begin{displaymath}
q(z) = (1 + z)(1 - z)^3\prod_{j = 1}^{\frac{N - 4}{2}}\br{1 + z^2 + 2z\gamma_j};
\end{displaymath}
\item $\kappa = \frac{N}{N - 2}$, $N$ even
\begin{displaymath}
q(z) = (1 - z)(1 + z)^3\prod_{j = 1}^{\frac{N - 4}{2}}\br{1 + z^2 + 2z\gamma_j};
\end{displaymath}
\item $\kappa = 1$, $N$ odd
\begin{displaymath}
q(z) = (1 - z)(1 + z)^2\prod_{j = 1}^{\frac{N - 3}{2}}\br{1 + z^2 + 2z\beta_j}.
\end{displaymath}
\end{enumerate}
\end{itemize}

Polynomial $P(z)$ with real coefficients is called reciprocal if $P(z) = \pm z^NP(\frac{1}{z})$ (additionally called symmetric or antisymmetric depending on the sign). The reciprocal polynomial has zeros located symmetrically with respect to the unit circle, i.e., if $z_0$ is a nontrivial root of a reciprocal polynomial, then $\frac{1}{z_0}$ is also a root of this polynomial. In particular, if $z_0$ lies on the unit circle then $\frac{1}{z_0}$ also lies on the unit circle.
\\

A significant number of works are devoted to polynomials whose zeros all lie on the unit circle--see, for example, \cite{ref3,ref4,ref5,ref6,ref7} where the importance of studying the properties of such polynomials is noted.
\\

The classical criterion of A. Cohn for the membership of all zeros of a polynomial on the unit circle is well known.

\begin{thm}\label{thm1}\cite{ref2}. All zeros of the polynomial $\displaystyle P(z) = \sum_{j = 0}^{N}a_j z^{N - j}$ with $a_0 \neq 0$ lie on the unit circle if and only if the polynomial $P(z)$ is reciprocal and all zeros of the polynomial $P'(z)$ belong to the closed central unit disc $\overline{\mathbb D} = \{z: \abs{z} \leq 1\}$.
\end{thm}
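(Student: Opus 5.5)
We prove both implications separately. The forward direction uses the symmetry of the zeros and Gauss--Lucas; the converse rests on a counting argument that pairs zeros inside the disc with zeros outside it.

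\emph{Necessity.} Suppose all zeros $z_1,\dots,z_N$ lie on $|z|=1$. Write $P(z)=a_0\prod(z-z_k)$. Then
\[
z^N\overline{P(1/\bar z)}=\bar a_0\prod(1-\bar z_k z)=\bar a_0\prod(-\bar z_k)\prod(z-z_k),
\]
using $\bar z_k=1/z_k$. So $P^*(z):=z^N\overline{P(1/\bar z)}$ equals $cP(z)$ with $|c|=1$, and $P$ is self-reciprocal in the unimodular sense. For real coefficients this reduces to $P(z)=\pm z^NP(1/z)$ after the usual normalisation. Next, $\overline{\mathbb D}$ is convex and contains all zeros of $P$. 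By the Gauss--Lucas theorem every zero of $P'$ lies in the convex hull of the zeros of $P$, hence in $\overline{\mathbb D}$.

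\emph{Sufficiency.} Assume $P^*=cP$ and that all zeros of $P'$ lie in $\overline{\mathbb D}$. Reciprocity gives $a_N\ne0$, so $0$ is not a root, and it makes the zero set invariant under $z\mapsto 1/\bar z$ with multiplicities. Hence the number $m$ of zeros in $\mathbb D$ equals the number outside $\overline{\mathbb D}$, and $N-2m$ zeros lie on the circle. We must show $m=0$.

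Differentiate the identity $z^N\overline{P(1/\bar z)}=cP(z)$. This gives
\[
NP(z)=zP'(z)+\bar c\, z^{N-1}\overline{P'(1/\bar z)},
\]
i.e.\ $NP=zP'+\bar c\,Q^*$ with $Q=P'$ regarded as a polynomial of degree $N-1$ and $Q^*(z)=z^{N-1}\overline{Q(1/\bar z)}$. On $|z|=1$ we have $|Q^*(z)|=|Q(z)|=|zP'(z)|$. Since all zeros of $P'$ lie in $\overline{\mathbb D}$, the quotient $B=\bar c\,Q^*/(zP')$ has modulus $1$ on the circle and is a ratio of products of Blaschke-type factors. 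Then $NP=zP'(1+B)$, and zeros of $P$ in $\mathbb D$ correspond to solutions of $B=-1$ there (or to common zeros with $zP'$).

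The cleanest route avoids Rouché on the boundary, where zeros of $P'$ on the circle cause trouble. Instead consider
\[
\frac{zP'(z)}{P(z)}=\sum_k\frac{z}{z-z_k}.
\]
For $|z|=1$, pairing each zero $z_k$ with its reflection $1/\bar z_k$ gives $\operatorname{Re}\frac{z}{z-z_k}+\operatorname{Re}\frac{z}{z-1/\bar z_k}=1$, and for $|z_k|=1$ one has $\operatorname{Re}\frac{z}{z-z_k}=\tfrac12$. Hence $\operatorname{Re}\frac{zP'}{P}=N/2$ on the circle away from zeros. In fact, the plan will apply this pairing just outside the disc.

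Take $|z|=R>1$. For a pair $(w,1/\bar w)$ with $|w|<1$, and for each unimodular zero, the real part of $z/(z-z_k)$ exceeds $1/2$ for $|z|>1$, with $\operatorname{Re}$ of the pair sum strictly positive. So $zP'/P$ does not vanish for $|z|>1$, and $P'$ has no zeros in $|z|>1$ automatically; this direction gives nothing new. We therefore use the hypothesis directly. Suppose $m>0$, and let $w$ be a zero of $P$ with $|w|>1$. Consider the component structure: $P'$ has $N-1$ zeros, all in $\overline{\mathbb D}$. Apply the argument principle to $P'/P=\sum 1/(z-z_k)$ on the circle $|z|=r$ with $r$ slightly larger than $\max(1,\dots)$ but less than $|w|$. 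Choosing $r\in(1,|w_{\min}|)$ avoiding zeros, the contour encloses all $N-1$ zeros of $P'$ and exactly $N-m$ zeros of $P$. This gives winding number $(N-1)-(N-m)=m-1$ for $P'/P$. On the other hand, $P'/P=\sum 1/(z-z_k)$ on $|z|=r$ close to $1$ can be analysed via the positivity of $\operatorname{Re}(zP'/P)$ established above, which for the enclosed pairs and unimodular zeros yields winding of $zP'/P$ equal to $0$ near $r=1^+$, i.e.\ winding of $P'/P$ equal to $-1$. Hence $m-1=-1$, so $m=0$.

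The main obstacle is making this final winding computation rigorous. The outside zeros at radius $>r$ contribute terms whose real part need not be positive, so one must show $\operatorname{Re}(zP'/P)>0$ on $|z|=r$ for $r$ close enough to $1$. I would handle this by using the exact pair identity: each reflected pair contributes $\operatorname{Re}>0$ uniformly near $|z|=1$, since at $|z|=1$ the pair sum has real part exactly $1$. The unimodular zeros are then handled by letting $r\downarrow1$ along radii avoiding them, where their contributions have real part tending to $1/2$ or becoming large and positive.
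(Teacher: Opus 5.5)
The paper gives no proof of this statement; it quotes Cohn's theorem from \cite{ref2} and only applies it. Your argument is a correct self-contained proof. Necessity follows from Gauss--Lucas. For sufficiency, the argument principle for $P'/P$ on $|z|=r$, with $r$ slightly above $1$, gives winding number $(N-1)-(N-m)=m-1$. On the other hand, $\mathrm{Re}\,(zP'/P)>0$ on that circle forces winding number $-1$, so $m=0$.

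One sentence must be deleted, because it is false. It is the claim that for $|z|>1$ each reflected pair sum has positive real part, so that ``$P'$ has no zeros in $|z|>1$ automatically''. The reciprocal polynomial $P(z)=z^2-\tfrac52 z+1=(z-2)(z-\tfrac12)$ has $P'(\tfrac54)=0$. Its pair sum $\frac{z}{z-2}+\frac{z}{z-1/2}$ equals $-\tfrac32$ at $z=\tfrac32$. If your claim were true, Cohn's hypothesis on $P'$ would be vacuous.

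Your final argument does not need this claim. However, ``positivity established above'' should be replaced by the near-circle estimate from your last paragraph, which is correct once stated precisely:
\begin{itemize}
\item On a closed annulus $1\le|z|\le r_0$, with $r_0$ below the smallest modulus of an exterior zero, each pair sum is continuous and has real part exactly $1$ on $|z|=1$. Hence it has real part $>1-\varepsilon$ on $|z|=r$ for $r$ near $1$.
\item Every unimodular zero $\zeta$ gives $\mathrm{Re}\,\frac{z}{z-\zeta}>\tfrac12$ for all $|z|>1$, because $u\mapsto 1/(1-u)$ maps $\mathbb D$ onto $\{\mathrm{Re}\,w>\tfrac12\}$.
\end{itemize}
So $\mathrm{Re}\,(zP'/P)>N/2-m\varepsilon>0$ on $|z|=r$. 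No ``radii avoiding them'' are needed, since the circle $|z|=r>1$ never meets the unimodular zeros. The hypothesis on $P'$ enters only through the count $N-1$. Dropping it, the same computation shows that for every self-inversive $P$, $P'$ has exactly $m$ zeros in $|z|>1$, which is a genuine bonus of your route.

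For comparison, the identity $NP=zP'+\bar c\,Q^*$ that you derived and then abandoned finishes the proof in two lines, with no Rouch\'e and no boundary trouble. Suppose all zeros $\beta$ of $Q=P'$ lie in $\overline{\mathbb D}$ and $|z|<1$. Then $|1-\bar\beta z|^2-|z-\beta|^2=(1-|z|^2)(1-|\beta|^2)\ge0$ and $|1-\bar\beta z|\ge 1-|z|>0$. Hence $|Q(z)|\le|Q^*(z)|$ and $Q^*(z)\neq0$, so $|zP'(z)|\le|z|\,|Q^*(z)|<|Q^*(z)|$ and therefore $P(z)\neq0$ in $\mathbb D$. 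Reciprocity then excludes zeros outside $\overline{\mathbb D}$. That route is shorter; yours yields the zero-counting refinement.
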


Notice that polynomials $p(z)$ and $q(z)$ are reciprocal ($p(z)$ is symmetric, $q(z)$ is antisymmetric) and all their zeros lie on the unit circle. In the extreme cases $\pa{\abs{\kappa} = \frac{N}{N - 2}, \ \abs{\kappa} = 1}$ these polynomials are related by simple relations with the Chebyshev polynomials of the second kind $U_N(x)$ and their first derivatives through their zeros\cite{ref1}.

In this paper, we consider a more general case of a reciprocal antisymmetric polynomial of the form $\displaystyle P(z) = \sum_{j = 0}^{s}(-1)^j\gamma_j\pa{z^j + z^{N + s + 1 - j}}$ with $(\gamma_0 = 1)$. The case of a symmetric polynomial can be considered similarly.

The purpose of this article is:
\begin{itemize}
\item obtaining the necessary conditions under which all zeros of the polynomial $P(z)$ lie on the unit circle, in the form of estimates for the coefficients $\gamma_j \ (j = 1,\ldots, s)$;
\item confirmation of the unimprovability of these estimates in the general case in the form of constructing extremal polynomials;
\item factorization of extremal polynomials through binomials with zeros on the unit circle, which are associated with derivatives of order $s \ (0 \leq s \leq N)$ of polynomials $U_N(x)$.
\end{itemize}

The obtained extremal properties of reciprocal polynomials are apparently new compared to the known ones (see, for example, \cite{ref8,ref9,ref10}).

As a consequence, we additionally obtain formulas representing the derivatives of Chebyshev polynomials of the second kind through linear combinations of the Chebyshev polynomials of the second kind. In \cite{ref11,ref12,ref13} it is noted that the exploration of such internal connections between Chebyshev polynomials of the second kind and their derivatives is an important and interesting task, and moreover, in the opinion of the authors, the results in this direction are clearly insufficient.

\section{Auxiliary results}

For every natural number $N \geq 0$ the family of Chebyshev polynomials of the second kind $U_N(x)$ are defined as follows (see, for example, \cite{ref11}):

\begin{equation} \label{cheby-eq}
U_N(z) = \sum_{j = 0}^{\br{\frac{N}2}}(-1)^j{N-j\choose j}(2z)^{N-2j},
\end{equation}
where we define
\begin{displaymath}
\br{\frac{N}{2}} = \begin{cases}
\frac{N}{2}, & \mbox{ if } N \mbox{ is even} \\
\frac{N - 1}{2}, & \mbox{ if } N \mbox{ is odd}
\end{cases}.
\end{displaymath}
Let us note that $\frac{(N - j)!}{j!(N - 2j)!} = {N - j \choose j}$. For $N < 0$ the family of Chebyshev polynomials of the second kind can be defined by the formula $U_{-N}(z) = -U_{N - 2}(z)$.

Formula~\eqref{cheby-eq} allows us to define polynomials $U_N(z)$ for all complex numbers $z$. We obtain explicit formulas for derivatives of order $s$ $\pa{0 \leq s \leq N}$ the Chebyshev polynomials of the second kind:
\begin{equation}\label{cheby-der-eqn1}
U_N^{(s)}(z) = s!\sum_{j = 0}^{\br{\frac{N - s}{2}}}(-1)^j {N-j\choose j}{N-2j\choose s}2^{N - 2j}z^{N - 2j - s}.
\end{equation}

Another explicit formula for the derivatives $U_N^{(s)}(z)$ was obtained on the basis of classical formulas of combinatorics in \cite{ref12}:
\begin{equation}\label{cheby-der-eqn2}
U_N^{(s)}(z) = \frac{2^s}{(s - 1)!}\sum_{j = 0}^{\br{\frac{N - s}{2}}}\frac{(N - j)!(j + s - 1)!}{j!(N - j - s + 1)!}(N - s - 2j + 1)U_{N - s - 2j}(z).
\end{equation}

Next, let's consider the polynomial $\displaystyle z^{\frac{N - s}{2}}U_N^{(s)}\pa{\frac{1}{2}\pa{z^{\frac{1}{2}} + z^{-\frac{1}{2}}}}$. In the case when $s = N$ notice that $\displaystyle z^{\frac{N - s}{2}}U_N^{(s)}\pa{\frac{1}{2}\pa{z^{\frac{1}{2}} + z^{-\frac{1}{2}}}} = 2^NN!$, as can be easily verified from \eqref{cheby-der-eqn1}. Thus, we turn our attention to the case when $s < N$.

\begin{lemma} \label{lem1}Let $\displaystyle \cb{\nu_j^{(s)}}_{j = 1}^{\br{\frac{N - s}{2}}}$ be the set of positive zeros of $U_N^{(s)}(z)$. Then, we have
\begin{eqnarray*}
\lefteqn{z^{\frac{N - s}{2}}U_N^{(s)}\pa{\frac{1}{2}\pa{z^{\frac{1}{2}} + z^{-\frac{1}{2}}}}} \\ &=& \frac{N!2^s}{(N - s)!}\begin{cases}
(1 + z)\prod_{j = 1}^{\frac{N - s - 1}{2}}\br{z^2 + 1 + 2z\pa{1 - 2\pa{\nu_j^{(s)}}^2}}, & N - s \mbox{ is odd} \\
\prod_{j = 1}^{\frac{N - s}{2}}\br{z^2 + 1 + 2z\pa{1 - 2\pa{\nu_j^{(s)}}^2}}, & N - s \mbox{ is even}
\end{cases}
\end{eqnarray*}
\end{lemma}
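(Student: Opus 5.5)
Write $V(x)=U_N^{(s)}(x)$, a polynomial of degree $n:=N-s$ with leading coefficient $s!\binom{N}{s}2^N=\frac{N!\,2^N}{(N-s)!}$ by \eqref{cheby-der-eqn1}, and with parity $(-1)^{n}$, since only powers $x^{n-2j}$ occur. The plan is to factor $V$ over its real zeros and then substitute $x=\frac12(z^{1/2}+z^{-1/2})$ factor by factor.

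The first step is to show that all $n$ zeros of $V$ are real, simple, and lie in $(-1,1)$. The zeros of $U_N$ are $\cos\frac{k\pi}{N+1}$, $k=1,\dots,N$; they are real, simple, and lie in $(-1,1)$. By Rolle's theorem applied repeatedly, each derivative $U_N^{(r)}$ has $N-r$ real simple zeros strictly interlacing those of $U_N^{(r-1)}$, so they stay in $(-1,1)$. Together with the parity, this means the zeros of $V$ are $\pm\nu_j$ for $j=1,\dots,[\frac{n}{2}]$, with $0<\nu_j<1$, plus the zero $x=0$ when $n$ is odd. (This also justifies the count $[\frac{N-s}{2}]$ of positive zeros in the statement.) Hence
\begin{displaymath}
V(x)=\frac{N!2^N}{(N-s)!}\, x^{\epsilon}\prod_{j=1}^{[n/2]}(x^2-\nu_j^2),
\end{displaymath}
where $\epsilon\in\{0,1\}$ is the parity of $n$.

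The second step is the substitution. With $x=\frac12(z^{1/2}+z^{-1/2})$ we have $x^2=\frac14(z+2+z^{-1})$, so
\begin{displaymath}
x^2-\nu^2=\frac{1}{4z}\br{z^2+1+2z\pa{1-2\nu^2}},
\end{displaymath}
and $x=\frac12 z^{-1/2}(1+z)$. Multiplying by $z^{n/2}=z^{\epsilon/2}\,z^{[n/2]}$ absorbs all the negative powers of $z$. The constants collect as $2^N\cdot 4^{-[n/2]}\cdot 2^{-\epsilon}=2^{N-n}=2^s$, which gives exactly $\frac{N!2^s}{(N-s)!}$ times the stated product, with the extra factor $(1+z)$ when $N-s$ is odd.

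The only point needing real care is the first step, namely checking that the zeros of the derivatives remain simple and inside $(-1,1)$. The interlacing argument handles this directly: between consecutive zeros of $U_N^{(r-1)}$ there is a zero of $U_N^{(r)}$, which accounts for all $N-r$ of its zeros. The choice of branch of $z^{1/2}$ is harmless, because the final expression involves only integer powers of $z$.
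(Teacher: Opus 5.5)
Your proof is correct and follows the same route as the paper: factor $U_N^{(s)}$ using its leading coefficient and its symmetric zeros $\pm\nu_j^{(s)}$ (together with $0$ when $N-s$ is odd), then substitute $x=\frac12(z^{1/2}+z^{-1/2})$ factor by factor and clear the negative powers of $z$. You also use Rolle's theorem to show that the zeros are real, simple and in $(-1,1)$, which the paper assumes without comment. Your leading coefficient $\frac{N!2^N}{(N-s)!}$ is the correct one, whereas the paper's displayed factorization drops the factor $2^N$.
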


\begin{proof} Let $N - s$ be an even number. Since
$\displaystyle U_N^{(s)}(z) = \frac{N!2^N}{(N - s)!}z^{N - s} + \cdots$
we have that
$\displaystyle U_N^{(s)}(z) = \frac{N!}{(N - s)!}\prod_{j = 1}^{\frac{N - s}{2}}\br{z^2 - \pa{\nu_j^{(s)}}^2}$.
Substituting
$\displaystyle\frac{1}{2}\pa{z^{\frac{1}{2}} + z^{-\frac{1}{2}}}$
in place of $z$ we obtain the desired formula after multiplying by $z^{\frac{N - s}{2}}$. The case when the number $N - s$ is odd is shown similarly. The lemma is proved.
\end{proof}

From Lemma~\ref{lem1} it follows that the polynomial
$\displaystyle z^{\frac{N - s}{2}}U_N^{(s)}\pa{\frac{1}{2}\pa{z^{\frac{1}{2}} + z^{-\frac{1}{2}}}}$ has degree $N - s$, is reciprocal, and all its zeros lie on the circle. Indeed, $\pa{\nu_j^{(s)}}^2\le 1,$ therefore $|1 - 2\pa{\nu_j^{(s)}}^2|\le 1,$ while the polynomial $z^2+2bz+1$ has all its zeros on the circle if $|b|\le 1$.

\begin{lemma}\label{lem2}
The following formulas hold:
\begin{eqnarray*}
z^{\frac{N}{2}}U_N\pa{\frac{1}{2}\pa{z^{\frac{1}{2}} + z^{-\frac{1}{2}}}} &=& 1 + z + z^2 + \cdots + z^N \\
z^{\frac{N - 1}{2}}U_N'\pa{\frac{1}{2}\pa{z^{\frac{1}{2}} + z^{-\frac{1}{2}}}} &=& 2\sum_{j = 0}^{N - 1}(j + 1)(N - j)z^j.
\end{eqnarray*}
\end{lemma}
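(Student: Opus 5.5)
The first identity comes from the trigonometric definition of $U_N$, and the second is obtained from it by differentiating once.

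For the first identity, put $x=\frac12\pa{z^{1/2}+z^{-1/2}}$ and write $w=z^{1/2}$, so $x=\frac12(w+w^{-1})$. The relation $U_N(\cos\theta)=\sin((N+1)\theta)/\sin\theta$ gives
\begin{displaymath}
U_N\pa{\tfrac12(w+w^{-1})}=\frac{w^{N+1}-w^{-(N+1)}}{w-w^{-1}}=w^{-N}\pa{1+w^2+\cdots+w^{2N}}.
\end{displaymath}
This holds on the unit circle, and both sides are Laurent polynomials in $w$, so it holds identically. To avoid quoting the trigonometric form, one can instead check that $V_N(w):=w^{-N}(1+w^2+\cdots+w^{2N})$ satisfies $V_0=1$, $V_1=w+w^{-1}=2x$, and $V_{N+1}=(w+w^{-1})V_N-V_{N-1}$. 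This is the recurrence for $U_N$, and it also follows from the explicit formula \eqref{cheby-eq}. Multiplying by $w^N=z^{N/2}$ gives $1+z+\cdots+z^N$.

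For the second identity, differentiate the first with respect to $w$. Since $\frac{dx}{dw}=\frac12(1-w^{-2})$, we get
\begin{displaymath}
U_N'(x)\cdot\tfrac12(1-w^{-2})=\frac{d}{dw}\sum_{k=0}^N w^{2k-N}=\sum_{k=0}^N(2k-N)w^{2k-N-1}.
\end{displaymath}
Hence
\begin{displaymath}
w^{N-1}U_N'(x)=\frac{2\sum_{k=0}^N(2k-N)w^{2k}}{w^2-1}=\frac{2\sum_{k=0}^N(2k-N)z^{k}}{z-1}.
\end{displaymath}
The numerator vanishes at $z=1$ because $\sum_k(2k-N)=0$, so the division is exact.

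It remains to compute the quotient, and this bookkeeping is the only real obstacle. I would verify directly that
\begin{displaymath}
(z-1)\sum_{j=0}^{N-1}(j+1)(N-j)z^j=\sum_{k=0}^N(2k-N)z^k,
\end{displaymath}
with $c_j=(j+1)(N-j)$ for $0\le j\le N-1$ and $c_{-1}=c_N=0$. The coefficient of $z^k$ on the left is $c_{k-1}-c_k=k(N-k+1)-(k+1)(N-k)=2k-N$. This telescoping check also covers the endpoints $k=0$ and $k=N$.

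Putting these together gives $z^{\frac{N-1}{2}}U_N'(x)=2\sum_{j=0}^{N-1}(j+1)(N-j)z^j$, as claimed. The only subtlety is the branch of $z^{1/2}$. Both sides are polynomials in $z$, since only even powers of $w$ appear after the normalization, so any consistent branch gives the same result.
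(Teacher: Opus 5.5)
Your proof is correct and follows the paper's route: extend the trigonometric identity $U_N(\cos\theta)=\sin((N+1)\theta)/\sin\theta$ from the unit circle to a polynomial identity, then apply the chain rule to the first formula. The differences are minor. You differentiate in $w=z^{1/2}$ and verify the division by $z-1$ explicitly through $c_{k-1}-c_k=2k-N$. The paper differentiates in $z$, passes through a closed form with denominator $(1-z)^3$, and asserts its expansion into $2\sum_{j}(j+1)(N-j)z^j$ without checking it, so your bookkeeping is the more complete of the two.
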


\begin{proof}
By definition, for any whole $N \geq 0$, we have
$\displaystyle U_N\pa{\cos\pa{\frac{t}{2}}} = \frac{\sin\pa{\frac{(N + 1)t}{2}}}{\sin\pa{\frac{t}{2}}}$ where
\begin{displaymath}
U_N\pa{\frac{1}{2}\pa{e^{i\frac{t}{2}} + e^{-i\frac{t}{2}}}} = \frac{e^{i\frac{(N + 1)t}{2}} - e^{-i\frac{(N + 1)t}{2}}}{e^{i\frac{t}{2}} + e^{-i\frac{t}{2}}} = \frac{e^{-i\frac{(N + 1)t}{2}}\pa{e^{i(N + 1)t} - 1}}{e^{-i\frac{t}{2}}\pa{e^{it} - 1}} = e^{-i\frac{Nt}{2}}\frac{1 - e^{i(N + 1)t}}{1 - e^{it}}.
\end{displaymath}
Thus, we have the following formula
\begin{displaymath}
U_N\pa{\frac{1}{2}\pa{z^{\frac{1}{2}} + z^{-\frac{1}{2}}}} = z^{-\frac{N}{2}}\frac{1 - z^{N + 1}}{1 - z} = z^{-\frac{N}{2}}\pa{1 + z + z^2 + \cdots + z^N}.
\end{displaymath}
holds for all $z \neq 1$ on the unit circle. Therefore, the two polynomials are equal for all $z \notin \{0,1\}$.

From this, we obtain the following
\begin{eqnarray*}
U_N'(x)\Big|_{x = \frac{1}{2}\pa{z^{\frac{1}{2}} + z^{-\frac{1}{2}}}} &=& \frac{d}{dz}\pa{z^{-\frac{N}{2}}\frac{1 - z^{N + 1}}{1 - z}} \frac{1}{\frac{d}{dz}\pa{\frac{1}{2}\pa{z^{\frac{1}{2}} + z^{-\frac{1}{2}}}}} \\
&=& 2z^{-\frac{N - 1}{2}} \frac{N(1 - z^{N + 2})-(N + 2)\pa{z - z^{N + 1}}}{(1 - z)^3},
\end{eqnarray*}
where
$U_N'\pa{\frac{1}{2}\pa{z^{\frac{1}{2}} + z^{-\frac{1}{2}}}} = 2z^{-\frac{N - 1}{2}}\sum_{j = 0}^{N - 1}(j + 1)(N - j)z^j$.
The lemma is proved.
\end{proof}

\section{Main Results}
\begin{thm}\label{thm2} The following formula holds.
\begin{equation}\label{cheby-der-eqn3}
z^{\frac{N-s}{2}}U_N^{(s)}\pa{\frac{1}{2}\pa{z^{\frac{1}{2}} + z^{-\frac{1}{2}}}} = {2^s}{s!}\sum_{j = 0}^{N - s}{N-j\choose s}{s+j\choose s}z^j.
\end{equation}
\end{thm}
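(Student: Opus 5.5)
Write $x=\frac12(z^{1/2}+z^{-1/2})$. The plan is to obtain the left-hand side of \eqref{cheby-der-eqn3} by differentiating the first identity of Lemma~\ref{lem2} $s$ times with respect to $x$, and to track the result through a generating-function identity rather than through the chain rule directly.

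\textbf{Step 1: a generating function in $x$.} Summing the first identity of Lemma~\ref{lem2} over $N$ gives the classical generating function
\begin{displaymath}
\sum_{N\ge0}U_N(x)w^N=\frac{1}{1-2xw+w^2}.
\end{displaymath}
Differentiating $s$ times in $x$ yields
\begin{displaymath}
\sum_{N}U_N^{(s)}(x)w^N=\frac{s!\,(2w)^s}{(1-2xw+w^2)^{s+1}}.
\end{displaymath}

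\textbf{Step 2: substitute for $x$ and factor.} With $2x=z^{1/2}+z^{-1/2}$ the denominator factors as
\begin{displaymath}
1-2xw+w^2=(1-z^{1/2}w)(1-z^{-1/2}w).
\end{displaymath}
Put $w=z^{1/2}u$. Then $z^{N/2}w^{N}$ becomes $z^Nu^N$, so $z^{(N-s)/2}U_N^{(s)}(x)$ is $z^{-s}$ times the coefficient of $u^N$ in
\begin{displaymath}
\frac{s!\,2^s z^{s/2}\cdot z^{s/2}u^s}{(1-zu)^{s+1}(1-u)^{s+1}}=\frac{s!\,2^s z^{s}u^s}{(1-zu)^{s+1}(1-u)^{s+1}}.
\end{displaymath}

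\textbf{Step 3: extract the coefficient.} After cancelling $z^s$, the left side equals $s!\,2^s$ times the coefficient of $u^{N-s}$ in $(1-zu)^{-s-1}(1-u)^{-s-1}$. Expanding both factors with $(1-t)^{-s-1}=\sum_k\binom{s+k}{s}t^k$, this coefficient is
\begin{displaymath}
\sum_{j=0}^{N-s}\binom{s+j}{s}z^j\binom{s+N-s-j}{s}=\sum_{j=0}^{N-s}\binom{s+j}{s}\binom{N-j}{s}z^j,
\end{displaymath}
which is exactly \eqref{cheby-der-eqn3}.

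\textbf{Checks and the main obstacle.} As consistency checks, $s=0$ recovers the first identity of Lemma~\ref{lem2}, and $s=1$ gives $2\sum_j(N-j)(j+1)z^j$, matching the second identity. The only delicate point is the handling of half-integer powers and formal series in Step 2. I would treat everything as formal power series in $u$ with coefficients Laurent polynomials in $z^{1/2}$, and this is justified because $U_N^{(s)}$ has parity $(-1)^{N-s}$, so the left-hand side contains only integer powers of $z$. Alternatively, one can argue for $z$ on the unit circle, as in Lemma~\ref{lem2}, and extend by polynomial identity. An independent route would be induction on $s$ via the chain rule $\frac{d}{dx}=\frac{2z^{3/2}}{z-1}\frac{d}{dz}$, but I expect the bookkeeping there to be heavier, so I would use the generating-function argument.
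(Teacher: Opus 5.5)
Your proof is correct, and it takes a genuinely different route from the paper.

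\textbf{The paper's route.} The paper works at fixed $N$ and inducts on $s$. The base cases $s=0,1$ are Lemma~\ref{lem2}. The inductive step differentiates the $s$-th identity in $z$ and multiplies by $\frac{4z^{3/2}}{z-1}$, the reciprocal of $\frac{dx}{dz}=\frac14 z^{-3/2}(z-1)$. This reduces everything to the polynomial identity $(z-1)\sum_j A_{N,s+1,j}z^j=(s+1)\sum_j(2j-N+s)A_{N,s,j}z^j$, where $A_{N,s,j}=\frac{(N-j)!(j+s)!}{j!(N-j-s)!}$. That identity is checked coefficient by coefficient using $A_{N,s+1,j}=A_{N,s,j}(N-j-s)(j+s+1)=A_{N,s,j+1}(j+1)(N-j)$. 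This approach uses only the chain rule. However, the closed form must be guessed in advance and is then merely verified.

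\textbf{What your route buys.} Your generating-function argument handles all $N$ at once, with no induction and no coefficient bookkeeping, and it actually \emph{derives} the formula. The product $\binom{s+j}{s}\binom{N-j}{s}$ appears naturally as the Cauchy product of two negative-binomial series. Working with formal power series in $u$ over $\mathbb{Q}[z^{1/2},z^{-1/2}]$ disposes of the half-integer powers cleanly. It also pays off for Theorem~\ref{thm3}. A partial-fraction decomposition of $(1-u)^{-s-1}(1-zu)^{-s-1}$ in $u$ shows directly that $(1-z)^{2s+1}F_{N,s}(z)$ has nonzero coefficients only in degrees $\le s$ and in degrees $N+1,\dots,N+s+1$. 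This is exactly the gap structure noted in the paper's Remark after that theorem.

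\textbf{Two small points.}
\begin{enumerate}
\item Before differentiating in $x$, the generating function must hold as an identity in $\mathbb{Q}[x][[w]]$. Summing Lemma~\ref{lem2} only gives it at $x=\frac12(z^{1/2}+z^{-1/2})$. That suffices because the coefficients of $w^N$ are polynomials in $x$ agreeing at infinitely many points. More directly, expanding $\sum_k(2xw-w^2)^k$ reproduces formula \eqref{cheby-eq} coefficientwise.
\item In your side remark about the alternative route, the chain-rule factor should be $\frac{4z^{3/2}}{z-1}$, not $\frac{2z^{3/2}}{z-1}$. This does not affect your proof.
\end{enumerate}
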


\begin{proof}
For $s = 0,1$, formula~\eqref{cheby-der-eqn3} is true (Lemma~\ref{lem2}). We proceed by induction. Let us denote
\begin{displaymath}
A_{N,s,j} =(s!)^2{N-j\choose s}{s+j\choose s}= \frac{(N - j)!(j + s)!}{j!(N - j - s)!}.
\end{displaymath}
Hence,
\begin{equation}\label{A-rec-eqn}
A_{N,s + 1,j} = A_{N,s,j}(N - j - s)(j + s + 1) = A_{N,s,j + 1}(j + 1)(N - j).
\end{equation}
Further, let us note that
$\displaystyle \frac{d}{dz}\pa{\frac{1}{2}\pa{z^{\frac{1}{2}} + z^{-\frac{1}{2}}}} = \frac{1}{4}z^{-\frac{3}{2}}(z - 1)$. Let \eqref{cheby-der-eqn3} be satisfied for some $s$ $\pa{1 \leq s < N}$. Then, for $s + 1$:
\begin{eqnarray*}
U_N^{(s + 1)}\pa{\frac{1}{2}\pa{z^{\frac{1}{2}} + z^{-\frac{1}{2}}}} 
&=& \pa{\frac{d}{dz}\pa{U_N^{(s)}\pa{\frac{1}{2}\pa{z^{\frac{1}{2}} + z^{-\frac{1}{2}}}}}} \cdot \frac{1}{\frac{d}{dz}\br{\frac{1}{2}\pa{z^{\frac{1}{2}} + z^{-\frac{1}{2}}}}}\\
&=& \frac{2^s}{s!}\pa{\frac{d}{dz}\pa{\sum_{j = 0}^{N - s}A_{N,s,j}}z^{j - \frac{N - s}{2}}}\frac{4z^{\frac{3}{2}}}{z - 1} \\
&=& \frac{2^s}{s!}\frac{4z^{\frac{3}{2}}}{z - 1}\sum_{j = 0}^{N-s}A_{N,s,j}\pa{j - \frac{N-s}{2}}z^{j - \frac{N - s}{2} - 1} \\
&=& \frac{2^{s + 1}}{(s + 1)!}\frac{z^{-\frac{N-s-1}{2}}}{z - 1}\sum_{j = 0}^{N-s}A_{N,s,j}(2j - N + s)(s + 1)z^j.
\end{eqnarray*}
It is necessary to check the identity
\begin{displaymath}
(z - 1)\sum_{j = 0}^{N - s - 1}A_{N,s + 1,j}z^j \equiv \sum_{j = 0}^{N - s}A_{N,s,j}(2j - N + s)(s + 1)z^j.
\end{displaymath}
To do this, we compare coefficients on powers of $z$. Using \eqref{A-rec-eqn} yields the following for powers of $z$:
\begin{itemize}
\item $z^0$: $-A_{N,s + 1,0} \equiv A_{N,s,0}(-N + s)(s + 1)$;
\item $z^{N - s}$: $A_{N,s + 1,N - s - 1} \equiv A_{N,s,N - s}(N - s)(s + 1)$;
\item $z^j$ $\pa{j = 1, \ldots, N - s - 1}$:
\begin{eqnarray*}
A_{N,s + 1,j - 1} - A_{N,s + 1,j} &=& A_{N,s,j}(j(N - j + 1) - (j + s + 1)(N - j - s)) \\
&\equiv& A_{N,s,j}(2j - N + s)(s + 1).
\end{eqnarray*}
\end{itemize}
By Strong Induction, the theorem is proved.
\end{proof}

Substituting $z = e^{2it}$ into formula~\eqref{cheby-der-eqn3} and letting $x = \cos(t)$, separating the real and imaginary parts yields
\begin{displaymath}
U_N^{(s)}(x) = {2^s}{s!}\sum_{j = 0}^{N - s} 
{N-j\choose s} {s+j\choose s}T_{N - s - 2j}(x),
\quad
 \sum_{j = 0}^{N - s} 
{N-j\choose s} {s+j\choose s} U_{N - s - 2j - 1}(x) = 0,
\end{displaymath}
where $T_N(x)$ are the Chebyshev polynomials of the first kind. 

Since $T_{N + 1}(x) = xU_N(x) - U_{N - 1}(x) = \frac{1}{2}\pa{U_{N + 1}(x) - U_{N - 1}(x)}$, the following relations are valid:
\begin{displaymath}
U_N^{(s)}(x) = {2^s}{s!}\sum_{j = 0}^{N - s} {N-j\choose s}{s+j\choose s}
U_{N - s - 2j}(x),
\end{displaymath}
\begin{displaymath}
U_N^{(s)}(x) = -{2^s}{s!}\sum_{j = 0}^{N - s} {N-j\choose s}{s+j\choose s}
U_{N - s - 2j-2}(x).
\end{displaymath}
Now, taking into account that the polynomial
$\displaystyle \sum_{j = 0}^{N - s} {N-j\choose s}{s+j\choose s}z^j$
is reciprocal, then we obtain the formula
\begin{displaymath}
U_N^{(s)}(x) = \Theta_{N,s} + {2^s}{s!}\sum_{j = 0}^{[\frac{N - s}2]} {N-j\choose s}{s+j\choose s} \pa{U_{N - s - 2j}(x) - U_{N - s - 2j - 2}(x)},
\end{displaymath}
where we define
\begin{displaymath}
\Theta_{N,s} = \begin{cases}
0, & N - s\mbox{ is odd} \\
-{2^s}{s!}\pa{\frac{N+s}2\choose\frac{N-s}2}^2, & N - s\mbox{ is even}
\end{cases}.
\end{displaymath}
All obtained formulas are equivalent to \eqref{cheby-der-eqn2}.

We normalize the polynomial $\displaystyle z^{\frac{N-s}{2}}U_N^{(s)}\pa{\frac{1}{2}\pa{z^{\frac{1}{2}} + z^{-\frac{1}{2}}}}$ so that the free term is equal to one, i.e. we define a polynomial $\displaystyle F_{N,s}(z) = 2^{-s}\frac{(N - s)!}{N!}z^{\frac{N-s}{2}}U_N^{(s)}\pa{\frac{1}{2}\pa{z^{\frac{1}{2}} + z^{-\frac{1}{2}}}}$ with $F_{N,s}(0) = 1$. Let us note that $F_{N,N}(z) \equiv 1$.

It follows from Lemma~\ref{lem1} that
\begin{displaymath}
F_{N,s}(z) = \left\{\begin{array}{ll}
(1 + z), & N - s \mbox{ is odd} \\
1, & N - s \mbox{ is even}
\end{array}\right\} \cdot
\prod_{j = 1}^{\br{\frac{N - s}{2}}}\br{z^2 + 1 + 2z\pa{1 - 2\pa{\nu_j^{(s)}}^2}}.
\end{displaymath}
Some examples are listed below.
\begin{enumerate}
\item For $s = 0$, by Lemma~\ref{lem2} we have
\begin{displaymath}
F_{N,0}(z) = \frac{1 - z^{N + 1}}{1 - z};
\end{displaymath}
hence,
\begin{displaymath}
1 - z^{N + 1} = (1 - z)\left\{\begin{array}{ll}
(1 + z), & N \mbox{ is odd} \\
1, & N \mbox{ is even}
\end{array}\right\} \cdot \br{z^2 + 1 + 2z(1 - 2\pa{\nu_j^{(0)}}^2}
\end{displaymath}
where $\cb{\nu_j^{(0)}}_{j = 1}^{\br{\frac{N}{2}}}$ is the set of positive zeros of polynomial $U_N(z)$.

\item For $s = 1$, by Lemma~\ref{lem2} and manipulation of geometric series we have
\begin{displaymath}
F_{N,1}(z) = \frac{1}{(1 - z)^3}\pa{1 - z^{N + 2} - \frac{N + 2}{N} (z - z^{N + 1})};
\end{displaymath}
hence,
\begin{eqnarray*}
\lefteqn{1 - z^{N + 2} - \frac{N + 2}{N}(z - z^{N + 1})}  \\
&=& (1 - z)^3\left\{\begin{array}{ll}
(1 + z), & N \mbox{ is even} \\
1, & N \mbox{ is odd}
\end{array}\right\} \cdot \br{z^2 + 1 + 2z(1 - 2\pa{\nu_j^{(1)}}^2}
\end{eqnarray*}
where $\cb{\nu_j^{(1)}}_{j = 1}^{\br{\frac{N - 1}{2}}}$ is the set of positive zeros for $U_N'(z)$

\item For $s = 2$, we have
\begin{displaymath}
F_{N,2}(z) = \frac{1}{(1 - z)^5}\pa{1 - z^{N + 3} - \frac{2(N + 3)}{N}(z - z^{N + 2}) + \frac{(N + 2)(N + 3)}{N(N - 1)}(z^2 - z^{N + 1})};
\end{displaymath}
\begin{eqnarray*}
\lefteqn{1 - z^{N + 3} - \frac{2(N + 3)}{N}(z - z^{N + 2}) + \frac{(N + 2)(N + 3)}{N(N - 1)}(z^2 - z^{N + 1})} \\
&=& ( 1- z)^5\left\{\begin{array}{ll}
(1 + z), & N \mbox{ is odd} \\
1, & N \mbox{ is even}
\end{array}\right\} \cdot
\prod_{j = 1}^{\br{\frac{N - 2}{2}}}\br{z^2 + 1 + 2z(1 - 2\pa{\nu_j^{(2)}}^2},
\end{eqnarray*}
where $\cb{\nu_j^{(2)}}_{j = 1}^{\br{\frac{N - 2}{2}}}$ is the set of positive zeros of $U_N''(z)$

\item For $s = 3$, we have
\begin{eqnarray*}
F_{N,3}(z) &=& \frac{1}{(1 - z)^7}\left[1 - z^{N + 4} - \frac{3(N + 4)}{N}(z - z^{N + 3}) + \frac{3(N + 3)(N + 4)}{(N - 1)N}(z^2 - z^{N + 2})\right. \\
& & \qquad \qquad - \left.\frac{(N + 2)(N + 3)(N + 4)}{(N -  2)(N - 1)N}(z^3 - z^{N + 1}) \right]
\end{eqnarray*}
\begin{eqnarray*}
\lefteqn{1 - z^{N + 4} - \frac{3(N + 4)}{N}(z - z^{N + 3}) + \frac{3(N + 3)(N + 4)}{(N - 1)N}(z^2 - z^{N + 2})} \\
\lefteqn{\qquad \qquad - \frac{(N + 2)(N + 3)(N + 4)}{(N -  2)(N - 1)N}(z^3 - z^{N + 1})} \\
&=& (1 - z)^7\left\{\begin{array}{ll}
(1 + z), & N \mbox{ is even} \\
1, & N \mbox{ is odd}
\end{array}\right\}
\prod_{j = 1}^{\br{\frac{N - 3}{2}}}\br{z^2 + 1 + 2z(1 - 2\pa{\nu_j^{(3)}}^2}
\end{eqnarray*}
where $\cb{\nu_j^{(3)}}_{j = 1}^{\br{\frac{N - 3}{2}}}$ is the set of positive zeros of $U_N^{(3)}(z)$

\item For $s = N - 2$, we have $F_{N,N - 2}(z) = (1 + z)^2$.

\item For $s = N - 1$, we have $F_{N,N - 1}(z) = 1 + z$.
\end{enumerate}

The constructed examples suggest the following representation of the polynomial $F_{N,s}(z)$.

\begin{thm}\label{thm3}
The following formula holds:
\begin{equation}\label{eqn6}
(1 - z)^{2s + 1}F_{N,s}(z) = {N\choose s}^{-1} \sum_{j = 0}^{s}(-1)^j {N+s+1\choose j}{N-j\choose N-s}\pa{z^j - z^{N + s + 1 - j}}.
\end{equation}
\end{thm}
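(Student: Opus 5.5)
By Theorem~\ref{thm2}, $F_{N,s}(z)$ has the explicit coefficients
\begin{displaymath}
F_{N,s}(z) = {N\choose s}^{-1}\sum_{k=0}^{N-s}{N-k\choose s}{s+k\choose s}z^k .
\end{displaymath}
The constant term is $1$, as required. So \eqref{eqn6} is a polynomial identity, and I would prove it by expanding $(1-z)^{2s+1}=\sum_m (-1)^m{2s+1\choose m}z^m$ and comparing coefficients. The coefficient of $z^n$ on the left is
\begin{displaymath}
c_n={N\choose s}^{-1}\sum_{k}(-1)^{n-k}{2s+1\choose n-k}{N-k\choose s}{s+k\choose s}.
\end{displaymath}
We must show three things. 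First, $c_n=(-1)^n{N+s+1\choose n}{N-n\choose N-s}{N\choose s}^{-1}$ for $0\le n\le s$. Second, $c_n=0$ for $s<n<N+1$. Third, the top coefficients match; these follow from the first by the symmetry discussed next.

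For the symmetry, note that $F_{N,s}$ is reciprocal of degree $N-s$, since its coefficient sequence is symmetric under $k\mapsto N-s-k$. Also $(1-z)^{2s+1}$ is antireciprocal of degree $2s+1$. Hence the product $Q(z)=(1-z)^{2s+1}F_{N,s}(z)$ satisfies $Q(z)=-z^{N+s+1}Q(1/z)$. This is exactly the antisymmetric structure $z^j - z^{N+s+1-j}$ on the right-hand side. It therefore suffices to handle the low coefficients $0\le n\le s$ and the middle range $s<n\le (N+s+1)/2$.

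For the vanishing of the middle coefficients I would use finite differences rather than summation identities. The map $k\mapsto {N-k\choose s}{s+k\choose s}$ is a polynomial $p(k)$ in $k$ of degree $2s$. Multiplying by $(1-z)^{2s+1}$ takes the $(2s+1)$-st finite difference of the coefficient sequence. Extend the sequence by $p(k)$ for all integers $k$. The polynomial $p$ vanishes exactly at $k=-1,\dots,-s$ and at $k=N-s+1,\dots,N$, and these are $s$ consecutive integers on each side. Then $\Delta^{2s+1}p\equiv 0$ as a polynomial sequence. The truncated sequence ($a_k=p(k)$ for $0\le k\le N-s$, zero otherwise) agrees with $p$ on every window $\{n-2s-1,\dots,n\}$ that avoids both $k\le -s-1$ and $k\ge N+1$. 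That is, it agrees for $n-2s-1\ge -s$ and $n\le N$, i.e.\ $s+1\le n\le N$. Hence $c_n=0$ in that range, which already covers the whole middle by the symmetry.

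The main obstacle is the low coefficients $0\le n\le s$. Here $c_n$ equals the difference of the truncated sequence, which equals the full-polynomial difference $0$ minus the contribution of the missing terms $k=-s-1,\dots$. More concretely,
\begin{displaymath}
c_n=-{N\choose s}^{-1}\sum_{k<0}(-1)^{n-k}{2s+1\choose n-k}p(k),
\end{displaymath}
and only $k\le -s-1$ contribute. Writing $k=-s-1-i$, one has
\begin{displaymath}
p(k)=(-1)^s{N+s+1+i\choose s}{s+i\choose s},
\end{displaymath}
with $0\le i\le n-s-1+s$, which gives a short alternating sum. If this sum does not collapse directly, I would instead verify the claimed closed form by induction on $s$. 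I would mirror the proof of Theorem~\ref{thm2}, using the derivative relation between $F_{N,s+1}$ and $F_{N,s}$ obtained from $U_N^{(s+1)}$ together with the recurrence \eqref{A-rec-eqn}. Alternatively, I would check that both sides satisfy the same first-order recurrence in $n$ via Zeilberger/Chu--Vandermonde. The base cases $s=0,1$ are given by the examples following Theorem~\ref{thm2}.
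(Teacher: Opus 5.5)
Your handling of everything except the low coefficients is correct, and for those parts it is cleaner than the paper. Reciprocity of $F_{N,s}$ against antireciprocity of $(1-z)^{2s+1}$ disposes of the top coefficients. The finite-difference argument (the degree-$2s$ polynomial $p(k)=\binom{N-k}{s}\binom{s+k}{s}$ vanishes at $k=-s,\dots,-1$ and $k=N-s+1,\dots,N$) gives $c_n=0$ for $s<n\le N$ with no hypergeometric machinery.

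\textbf{The gap.} The coefficients $0\le n\le s$ are the actual content of the theorem, and there the proposal stops. Your reduction is right once the index range is corrected: it is $0\le i\le s-n$, not $i\le n-1$. It gives
\[
\binom{N}{s}c_n=(-1)^n\sum_{i=0}^{s-n}(-1)^i\binom{2s+1}{s-n-i}\binom{N+s+1+i}{s}\binom{s+i}{s},
\]
and you need this to equal $(-1)^n\binom{N+s+1}{n}\binom{N-n}{s-n}$. The sum has three $i$-dependent binomial factors. It is a terminating balanced ${}_3F_2(n-s,\,s+1,\,N+s+2;\,s+n+2,\,N+2;\,1)$, not a Vandermonde-type convolution, so there is no reason to expect it to collapse directly, and Chu--Vandermonde alone does not evaluate it.

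The missing ingredient is the Pfaff--Saalsch\"utz formula, which is exactly what the paper uses. It writes the full convolution coefficient $A_p$ as a balanced ${}_3F_2(-p,\,s+1,\,s-N;\,-N,\,2s+2-p;\,1)$ and evaluates it in closed form for every $p$ at once. Applied to your reduced sum, Saalsch\"utz does give the required value. Your induction-on-$s$ and Zeilberger alternatives are only named, not carried out, so as written the proof is incomplete at its central step.

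\textbf{An elementary way to close it.} Let $\theta=z\frac{d}{dz}$ and $\pi(x)=\binom{N-x}{N-s}$, a polynomial of degree $N-s$. We have $\pi(N+s+1-n)=(-1)^{N-s}\pi(n)$ and $\pi(n)=0$ for $s<n\le N$. Using these, check that the right-hand side equals $\binom{N}{s}^{-1}\pi(\theta)(1-z)^{N+s+1}$. Each application of $\theta$ lowers the order of vanishing at $z=1$ by at most one, so the right-hand side is divisible by $(1-z)^{2s+1}$. The difference of the two sides is therefore divisible by $(1-z)^{2s+1}$. By your middle-vanishing result, and since the constant terms ($1$) and leading coefficients ($-1$) agree, this difference has at most $2s$ nonzero coefficients. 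By Descartes' rule of signs it then has at most $2s-1$ positive roots counted with multiplicity, so it vanishes identically.
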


\begin{proof} Formula \eqref{eqn6} is equivalent to
\begin{equation}\notag
(1 - z)^{2s + 1}F_{N,s}(z) = \frac{s!(N + s + 1)!}{N!}\sum_{j = 0}^{s}(-1)^j \frac{(N - j)!}{j!(s - j)!(N + s + 1- j)!}\pa{z^j - z^{N + s + 1 - j}}.
\end{equation}
By virtue of Theorem~\ref{thm1}, it is required to prove that
\begin{eqnarray}\label{eq7}
\lefteqn{(1 - z)^{2s + 1}\sum_{j = 0}^{N - s}\frac{(N - j)!(j + s)!}{j!(N - j - s)!}z^j} \nonumber \\&=& \frac{s!s!(N + s + 1)!}{(N - s)!}\sum_{j = 0}^{s}(-1)^j \frac{(N - j)!}{j!(s - j)!(N + s + 1- j)!}\pa{z^j - z^{N + s + 1 - j}}.
\end{eqnarray}

Let's consider the polynomial on the left side of equation~\eqref{eq7} and find $A_p$, where $A_p$ is the coefficient on $z^p$. We use the binomial formula and the formula for the convolution of the coefficients of the product of two polynomials. We obtain
\begin{displaymath}
A_p = \sum_{j = 0}^{N - s}(-1)^{p - j}\frac{(N - j)!(j + s)!(2s + 1)!}{j!(N - j - s)!(p - j)!(2s + 1 + j - p)!}.
\end{displaymath}
Note that
\begin{displaymath}
A_p =(-1)^p\frac{N!s!(2s + 1)!}{p!(N - s)!(2s + 1- p)!} \cdot \prescript{3}{F}{2}\pa{\br{-p,s + 1,-N + s};\br{-N,2s + 2 - p};1},
\end{displaymath}
 where
$\displaystyle\prescript{3}{F}{2}\pa{\br{a_1,a_2,a_3};\br{b_1,b_2};z}$
is a generalized hypergeometric function in \cite{ref14} with $a_1 = -p$, $a_2 = s + 1$, $a_3 = -N + s$, $b_1 = -N$, $b_2 = 2s + 2 - p$, $z = 1$. Further using the formula \cite[5.2.4,(1)]{ref14}, we obtain  the following:
$\displaystyle\prescript{3}{F}{2}\pa{\br{-n,a,b};\br{c,1 + a + b - c - n}; 1} = \frac{(c - a,n)(c - b,n)}{(c,n)(c - a - b,n)}$, $n = 0,1, \ldots$.
Here $(x,k)$ is the pochhammer symbol, 
$$(x,k) = \lim_{\tau \rightarrow 0} \frac{\Gamma(1 - x + \tau)}{\Gamma(1 - x - k + \tau)}=
\lim_{\tau \rightarrow 0} \frac{\Gamma(x +  k + \tau)}{\Gamma(x + \tau)},$$ 
and $\Gamma(\cdot)$ is the Gamma function. Then,
\begin{displaymath}
A_p = (-1)^p \frac{s!s!(N + s + 1)!(N - p)!}{p!(N - s)!(s - p)!(N + s + 1 - p)!}.
\end{displaymath}
But this expression exactly coincides with the coefficient of $z^p$ on the right-hand side of equality~\eqref{eq7}. The theorem is proven.
\end{proof}

{\it Remark}. $A_p = 0$ when $s < p < N + 1$. Consequently, we have the following 
\begin{corollary}\label{cor:1}
\begin{eqnarray*}
\lefteqn{\sum_{j = 0}^{s}(-1)^j{s \choose j}{N + s + 1 \choose j}\pa{{N \choose j}}^{-1}\pa{z^j - z^{N + s + 1 - j}}} \\
&=& (1 - z)^{2s + 1} \prod_{j = 1}^{\br{\frac{N - s}{2}}}\br{z^2 + 1 + 2z(1 - 2\pa{\nu_j}^2)}\left\{\begin{array}{ll}
(1 + z), & N - s \mbox{ is odd} \\
1, & N - s \mbox{ is even}
\end{array}\right\},
\end{eqnarray*}
where $\cb{\nu_j}_{j = 1}^{\br{\frac{N - s}{2}}}$ is the set of positive roots of $U_N^{(s)}(z)$.
\end{corollary}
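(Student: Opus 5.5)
The plan is to read the corollary off Theorem~\ref{thm3} combined with the product representation of $F_{N,s}$ that follows from Lemma~\ref{lem1}. No new analytic input should be needed; the work is a binomial identity and a substitution.

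\emph{Step 1 (matching the coefficients).} Theorem~\ref{thm3} gives
\[
(1-z)^{2s+1}F_{N,s}(z)=\sum_{j=0}^{s}(-1)^j c_j\pa{z^j - z^{N+s+1-j}},\qquad
c_j={N\choose s}^{-1}{N+s+1\choose j}{N-j\choose N-s}.
\]
I will check that ${N\choose s}^{-1}{N-j\choose N-s}={s\choose j}{N\choose j}^{-1}$ for $0\le j\le s$. Writing both sides in factorials, the left-hand side is
\[
\frac{(N-j)!}{(N-s)!\,(s-j)!}\cdot\frac{s!\,(N-s)!}{N!}=\frac{s!\,(N-j)!}{(s-j)!\,N!}.
\]
The right-hand side is
\[
\frac{s!}{j!\,(s-j)!}\cdot\frac{j!\,(N-j)!}{N!},
\]
which is the same expression. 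Hence $c_j={s\choose j}{N+s+1\choose j}{N\choose j}^{-1}$, and the sum in Theorem~\ref{thm3} is exactly the left-hand side of the corollary.

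\emph{Step 2 (substituting the factorization).} Next I replace $F_{N,s}(z)$ by its product form. This form was derived right after the normalization $F_{N,s}(0)=1$: the constant $\frac{N!2^s}{(N-s)!}$ in Lemma~\ref{lem1} cancels against the normalizing factor $2^{-s}\frac{(N-s)!}{N!}$. What remains is the factor $(1+z)$ when $N-s$ is odd (and $1$ otherwise) times
\[
\prod_{j=1}^{\br{\frac{N-s}{2}}}\br{z^2+1+2z\pa{1-2\nu_j^2}},
\]
where the $\nu_j=\nu_j^{(s)}$ are the positive zeros of $U_N^{(s)}$. Multiplying this by $(1-z)^{2s+1}$ gives the right-hand side of the corollary.

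The only point needing care is Lemma~\ref{lem1} itself, namely that $U_N^{(s)}$ has exactly $\br{\frac{N-s}{2}}$ positive zeros. This holds because $U_N$ has $N$ simple real zeros in $(-1,1)$, so by Rolle's theorem every derivative has only real simple zeros in $(-1,1)$, and these are symmetric about $0$ by parity. For odd $N-s$ the remaining zero is $0$, which produces the factor $1+z$.

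The case $s=N$ is consistent with this: $F_{N,N}\equiv1$, the product is empty, and the identity reduces to Theorem~\ref{thm3} directly. I expect no real obstacle; the binomial simplification in Step 1 is the only computation.
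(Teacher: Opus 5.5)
Your proposal is correct and is essentially the paper's argument. The corollary is read off from Theorem~\ref{thm3} combined with the product form of $F_{N,s}$ obtained from Lemma~\ref{lem1}, using exactly the identity ${s\choose j}{N\choose j}^{-1}={N\choose s}^{-1}{N-j\choose N-s}$ that the paper records right after the statement; your Rolle's-theorem remark (simple real zeros of $U_N^{(s)}$ in $(-1,1)$, symmetric by parity) also fills in the count of positive zeros that Lemma~\ref{lem1} takes for granted.
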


Note that 
$$
{s\choose j}\left({N\choose j}\right)^{-1}=\left({N\choose s}\right)^{-1}{N-j\choose N-s}.
$$

\section{Extremal properties of polynomials with zeros on the unit circle}

Let us consider an antisymmetric reciprocal polynomial of the form
\begin{equation}\label{eqn8}
P(z) = \sum_{j = 0}^s(-1)^j\gamma_j\pa{z^j - z^{N + s + 1 - j}},
\end{equation}
where $\gamma_0 = 1$ for $0 \leq s \leq N$. Obviously, $P(1) = 0$, $P(-1) = 0$, where $N - s$ is odd.

\begin{lemma}\label{lem3}
If the inequality $\displaystyle \sum_{j = 1}^{s} \abs{\gamma_j} \leq 1$ is satisfied, then all of the zeros of the polynomial $P(z)$ are located on the unit circle.
\end{lemma}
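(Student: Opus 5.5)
The plan is to apply Cohn's criterion (Theorem~\ref{thm1}). Put $M = N+s+1$, so that $P(z) = \sum_{j=0}^{s}(-1)^j\gamma_j\pa{z^j - z^{M-j}}$. By construction $P$ is antisymmetric reciprocal, $P(z) = -z^M P(1/z)$. Its leading coefficient is $-\gamma_0 = -1 \neq 0$, so $P$ has exact degree $M$. The only thing left is to show that every zero of $P'(z)$ lies in $\overline{\mathbb D}$.

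We will write $P = -z^M + Q(z)$ with
\begin{displaymath}
Q(z) = 1 + \sum_{j=1}^{s}(-1)^j\gamma_j z^j - \sum_{j=1}^{s}(-1)^j\gamma_j z^{M-j}.
\end{displaymath}
Note that $s < M-s$, because $M - 2s = N - s + 1 \geq 1$, so none of these powers overlap. Differentiating gives
\begin{displaymath}
P'(z) = -Mz^{M-1} + \sum_{j=1}^{s}(-1)^j\gamma_j\pa{jz^{j-1} - (M-j)z^{M-j-1}}.
\end{displaymath}

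We then use Rouché's theorem on a circle $\abs{z} = r$ with $r>1$. The aim is the strict inequality
\begin{displaymath}
\abs{\sum_{j=1}^{s}(-1)^j\gamma_j\pa{jz^{j-1} - (M-j)z^{M-j-1}}} < Mr^{M-1}.
\end{displaymath}
By the triangle inequality the left side is at most $\sum_{j}\abs{\gamma_j}\pa{jr^{j-1} + (M-j)r^{M-j-1}}$. Since $r>1$ and $j\le M-j$, we have $r^{j-1}\le r^{M-j-1}$, so each term satisfies
\begin{displaymath}
jr^{j-1} + (M-j)r^{M-j-1} \le Mr^{M-j-1} \le Mr^{M-2} < Mr^{M-1},
\end{displaymath}
using $j\ge 1$. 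Together with $\sum_{j\ge1}\abs{\gamma_j}\le 1$ this gives the strict bound. Rouché then says $P'$ has the same number of zeros in $\abs{z}<r$ as $-Mz^{M-1}$, namely all $M-1$ of them. Letting $r\downarrow 1$ puts every zero of $P'$ in $\overline{\mathbb D}$, and Theorem~\ref{thm1} then places all zeros of $P$ on the unit circle.

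The only delicate point is keeping the inequality strict when some $\gamma_j$ vanish or $\sum\abs{\gamma_j}=1$. This is handled by the factor $r^{M-2} < r^{M-1}$, which holds because $r>1$. The degenerate case $s=0$ is immediate, since then $P = 1 - z^{N+1}$.
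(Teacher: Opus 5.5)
Your proof is correct and follows the paper's argument. Like the paper, you note that the non-leading coefficients of $P'(z)$ have absolute values summing to at most $(N+s+1)\sum_{j\ge1}|\gamma_j|\le N+s+1$, the modulus of the leading coefficient, then use Rouch\'e to place all zeros of $P'$ in $\overline{\mathbb D}$ and conclude with Cohn's criterion (Theorem~\ref{thm1}). Your circles $|z|=r>1$ with $r\downarrow 1$ simply make explicit the strict inequality that the paper's one-line appeal to Rouch\'e leaves implicit.
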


\begin{proof}
Let's define the derivative
\begin{displaymath}
P'(z) = -(N + s + 1)z^{N + s} + \sum_{j = 1}^{s}(-1)^j\gamma_j\pa{jz^{j - 1} - (N + s + 1 - j)z^{N + s - j}}
\end{displaymath}
If the sum of the absolute values of the coefficients of the polynomial $\displaystyle \frac{-1}{N + s + 1}P'(z)$ (excluding the coefficient for the highest power of $z$) does not exceed 1, Rouch\'{e}'s Theorem tells us that all zeros of this polynomial belong to the closed central unit disc $\overline{\mathbb D} = \cb{z \ : \ \abs{z} \leq 1}$.  Therefore, by Theorem~\ref{thm1}, all of the zeros of the polynomial $P(z)$ are located on the unit circle. The lemma is proved.
\end{proof}

\begin{thm}\label{thm4}
If all zeros of a polynomial $P(z)$ are located on the unit circle, then
\begin{equation}\label{eqn9}
\abs{\gamma_j} \leq {s \choose j}{N + s + 1 \choose j}\pa{{N \choose j}}^{-1}, \ j = 1,\ldots,s.
\end{equation}
Moreover, these estimates cannot be improved in general.
\end{thm}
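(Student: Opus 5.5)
The plan is to pass from $P$ to a real trigonometric polynomial and use operators that preserve real-rootedness to isolate two frequencies at a time. Set $M=N+s+1$ and $a=M/2$. Since $P$ in \eqref{eqn8} is antisymmetric of degree $M$,
\begin{displaymath}
f(\theta)=\frac{e^{-ia\theta}}{-2i}P(e^{i\theta})=\sum_{j=0}^{s}(-1)^j\gamma_j\sin\pa{(a-j)\theta}
\end{displaymath}
is a real trigonometric polynomial, and its frequencies $a-j$ all lie in $a-\mathbb Z$. The hypothesis that all zeros of $P$ lie on the unit circle says exactly that $f$ has the maximal possible number $M$ of real zeros, counted with multiplicity, on a period interval of length $2\pi$. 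I will call such an $f$ hyperbolic.

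The first tool is the family of operators $L_b=\frac{d^2}{d\theta^2}+b^2$. On the term $\sin((a-i)\theta)$, $L_b$ acts as multiplication by $b^2-(a-i)^2$. For $b$ equal to one of the frequencies present, I expect $L_b$ to preserve hyperbolicity, by a Rolle/Laguerre-type argument. The idea is to write $L_b g=e^{\mp ib\theta}\frac{d}{d\theta}\bigl(e^{\pm ib\theta}\cdots\bigr)$, or equivalently to apply Rolle twice to $g/\cos(b\theta-\phi)$. Together with the fact that the degree does not exceed $a$, this should show that the count of $M$ real zeros is not lost. In the $z$-picture, $L_b$ is the differential operator $-(zD)^2+azD\cdots$ obtained from the derivative of $f$, which corresponds to $zP'-\tfrac{M}{2}P$; this links the step to Theorem~\ref{thm1} and Rolle's theorem. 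Establishing that $L_b$ preserves hyperbolicity, including the multiplicity bookkeeping at the endpoints of the period, is the step I expect to be the main obstacle, and I would try the double Rolle argument first.

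With this tool, fix $j\in\{1,\dots,s\}$ and apply the composition $\prod_{k\in\{1,\dots,s\}\setminus\{j\}}L_{a-k}$. This kills every frequency except $a$ and $a-j$, and leaves the hyperbolic two-term trigonometric polynomial
\begin{displaymath}
\lambda_0\sin(a\theta)+(-1)^j\gamma_j\lambda_j\sin((a-j)\theta),\qquad \lambda_i=\prod_{k\ne 0,j}\bigl((a-k)^2-(a-i)^2\bigr)=\prod_{k\ne0,j}(i-k)(M-i-k).
\end{displaymath}
For a two-term expression $\sin(a\theta)+c\sin(b\theta)$ with $0<b<a$, hyperbolicity forces $|c|\le a/b$. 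To see this, note that otherwise the zero at $\theta=0$ and the behaviour near $\theta=0$ (compare $f'(0)=a+cb$) lose real zeros. This is exactly the mechanism behind the quadrinomial bound $N/(N-2)$ of \cite{ref1}, which is the case $s=j=1$ here, since $a/(a-1)=(N+2)/N$. Hence
\begin{displaymath}
|\gamma_j|\le\frac{a}{a-j}\,\frac{|\lambda_0|}{|\lambda_j|},
\end{displaymath}
and a routine simplification of the products should turn this into ${s\choose j}{N+s+1\choose j}{N\choose j}^{-1}$. I would check this simplification first for $s=2$ against the example $F_{N,2}$ listed above.

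For unimprovability, Corollary~\ref{cor:1} gives an explicit polynomial of the form \eqref{eqn8} with $\gamma_j={s\choose j}{N+s+1\choose j}{N\choose j}^{-1}$ for every $j$ simultaneously. It factors as $(1-z)^{2s+1}$ times factors $z^2+1+2z(1-2\nu_j^2)$ with $0<\nu_j<1$, and possibly $(1+z)$. All of its zeros are therefore on the unit circle by the remark after Lemma~\ref{lem1}. So equality is attained in \eqref{eqn9} for each $j$, and the estimates cannot be improved.
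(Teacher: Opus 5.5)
There is a genuine gap, and it sits in the step you flagged. The claim that $L_b=\frac{d^2}{d\theta^2}+b^2$, with $b=a-k$ one of the frequencies present, preserves hyperbolicity is not just unproved but false. It fails even inside your class of sine sums with frequencies $a,a-1,\dots,a-s$. Take $N=s=2$ (so $M=5$, $a=\frac52$) and the extremal polynomial of Corollary~\ref{cor:1}, $P(z)=(1-z)^5$, i.e.\ $\gamma_1=5$, $\gamma_2=10$. Then $f(\theta)=\sin\frac{5\theta}{2}-5\sin\frac{3\theta}{2}+10\sin\frac{\theta}{2}=16\sin^5\frac{\theta}{2}$ is hyperbolic. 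For $j=2$ your recipe applies $L_{3/2}$ and produces
\[
L_{3/2}f=-4\sin\frac{5\theta}{2}+20\sin\frac{\theta}{2}=16\sin^3\frac{\theta}{2}\pa{5-4\sin^2\frac{\theta}{2}},
\]
which has only $3$ real zeros per period instead of $5$.

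In the $z$-picture, $L_{a-k}$ is the operator $-(\vartheta-k)(\vartheta-(M-k))$ with $\vartheta=z\frac{d}{dz}$. It is not $zP'-\frac M2P=(\vartheta-\frac M2)P$, which is what Rolle's theorem actually controls; for $k\neq\frac M2$, the zeros of $(\vartheta-k)P$ that are not zeros of $P$ lie strictly off the circle. Here the operator sends $(1-z)^5$ to $4(z-1)^3(z^2+3z+1)$, whose zeros $\frac{-3\pm\sqrt5}{2}$ are off the circle. The double Rolle idea cannot be repaired either. The quotient $T/\cos(b\theta-\phi)$ has $2b$ poles per period, so Rolle only guarantees about $2(a-b)$ zeros of the Wronskian $\cos(b\theta-\phi)T'+b\sin(b\theta-\phi)T$, far short of what would force $2a$ zeros of $L_bT$. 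Your two-term bound is also justified only by a local computation at $\theta=0$, which by itself does not handle $c>a/b$, but this is secondary.

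So the chain ``$f$ hyperbolic $\Rightarrow$ two-term polynomial hyperbolic $\Rightarrow |c|\le a/b$'' breaks at the first arrow. Your simplification of $\frac{a}{a-j}\frac{|\lambda_0|}{|\lambda_j|}$ to ${s\choose j}{N+s+1\choose j}{N\choose j}^{-1}$ is correct, but it does not rescue the logic. In the example above, the two-term polynomial lands exactly at $c=-a/b$ while not being hyperbolic.

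The missing idea is to differentiate in $z$ rather than in $\theta$. The low block $\sum_{j\le s}(-1)^j\gamma_jz^j$ has degree at most $s$, so $P^{(s+1)}$ kills it entirely. What remains is $P^{(s+1)}=-\frac{(N+s+1)!}{N!}z^{N-s}Q(z)$, where $Q(z)=\sum_{j=0}^{s}(-1)^j\gamma_j\frac{N!\,(N+s+1-j)!}{(N+s+1)!\,(N-j)!}z^{s-j}$ is monic of degree $s$. By Gauss--Lucas the zeros of $Q$ lie in $\overline{\mathbb D}$. The $j$-th elementary symmetric function of $s$ points of $\overline{\mathbb D}$ has modulus at most ${s\choose j}$, so Vieta gives all the bounds \eqref{eqn9} at once; this is the paper's proof. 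Your sharpness argument via Corollary~\ref{cor:1} is fine and coincides with the paper's.
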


\begin{proof}
Let us define the derivative of order $s + 1$
\begin{eqnarray*}
P^{(s + 1)}(z) &=& \pa{-\sum_{j = 0}^{s}(-1)^j\gamma_jz^{N + s + 1 - j}}^{(s + 1)} \\
&=& -\frac{(N + s+ 1)!}{N!}z^{N - s}\sum_{j = 0}^{s}(-1)^j\gamma_j\frac{N!}{(N + s + 1)!}\frac{(N + s + 1 - j)!}{(N - j)!}z^{s - j}.
\end{eqnarray*}
If all zeros of a polynomial $P(z)$ are located on the unit circle, then all of the zeros of the polynomial $P'(z)$ lie inside the disc $\overline{\mathbb D}$. Since the zeros of the derivative of a polynomial lie inside the closed convex hull of the zeros of this polynomial, then all the zeros of the polynomial $P^{(s + 1)}(z)$ must lie inside the disc $\overline{\mathbb D}$. From Vieta's theorem it follows that the coefficients of the polynomial $\displaystyle \sum_{j = 0}^{s}(-1)^j\gamma_j\frac{N!}{(N + s + 1)!}\frac{(N + s + 1 - j)!}{(N - j)!}z^{s - j}$ must satisfy the constraints:
$\displaystyle |\gamma_j|\frac{N!}{(N + s + 1)!}\frac{(N + s + 1 - j)!}{(N - j)!} \leq {s \choose s - j}$ for $j = 1, \ldots, s$, which are equivalent to conditions \eqref{eqn9}. Generally speaking, it is impossible to reduce the validity of even one of the inequalities \eqref{eqn9}, as shown by the example from the Corollary \ref{cor:1}. 
The theorem is proved.
\end{proof}

Thus, the set in the space of coefficients of the polynomial \eqref{eqn8}, for which all its zeros lie on the unit circle, is contained in the parallelepiped
\begin{displaymath}
\hat{\Pi} = \cb{\gamma_1,\ldots,\gamma_s \ : \ \abs{\gamma_j} \leq {s \choose j}{N + s + 1 \choose j}\pa{{N \choose j}}^{-1}, \ j = 1,\ldots,s }
\end{displaymath}
and contains a parallelepiped (cube)
\begin{displaymath}
\tilde{\Pi} = \cb{\gamma_1,\ldots,\gamma_s \ : \ \sum_{j = 1}^{s}\abs{\gamma_j} \leq 1}.
\end{displaymath}
Moreover, without additional assumptions on the coefficients, neither the sides of the parallelepiped $\hat{\Pi}$ nor the edges of the cube $\tilde{\Pi}$ can be reduced or increased. For the latter case, an example would be a polynomial $P(z) = \pa{1 - z^{N + 3}} + \gamma_2\pa{z^2 - z^{N + 1}}$. Then, $\gamma_2 \leq 1$ for odd $N$ and $\gamma_2 \geq -1$ for even $N$ \cite{ref1}.

\section{Some examples}

We will find $\displaystyle P(e^{it}) = -2ie^{i\frac{N + s + 1}{2}t} S(t)$ where $\displaystyle S(t) = \sum_{j = 0}^{s}(-1)^j\gamma_j\sin\pa{\pa{\frac{N + s + 1}{2} - j}t}$.

From this formula we immediately obtain a criterion for all zeros of polynomial \eqref{eqn8} to belong to the unit circle: in order for all zeros of polynomial \eqref{eqn8} to lie on the unit circle, it is necessary and sufficient that the trigonometric polynomial $S(t)$ would have $\displaystyle \br{\frac{N + s + 3}{2}}$ zeros on $\br{0,\pi}$ taking into account their multiplicity. Henceforth, we will refer to this as ``condition A". 

However, this criterion can only be used to check whether all zeros of the polynomial \eqref{eqn8} belong to the unit circle when all coefficients of the polynomial are exactly known. To construct sets in the space of coefficients for which all zeros of the polynomial \eqref{eqn8} belong to the unit circle, condition A must be supplemented with other conditions, which can be obtained, for example, from Theorem~\ref{thm1}.

We will find $\displaystyle P'(e^{it}) = -i(N + s + 1)e^{i\frac{N + s - 1}{2}t}\br{iS(t) + C(t)}$, where
\begin{eqnarray*}
C(t) &=& \frac{2}{N + s + 1}S'(t) \\
&=& \sum_{j = 0}^{s}(-1)^j\gamma_j\pa{\frac{N + s + 1 - 2j}{N + s + 1}}\cos\pa{\pa{\frac{N + s + 1}{2} - j}t}. 
\end{eqnarray*}

Hence, if $P'(z_0)  = 0$ with $\abs{z_0} = 1$ then $P(z_0) = 0$. According to Theorem~\ref{thm1}, the boundary of a set in the space of coefficients $\gamma_1, \ldots, \gamma_s$ for which all zeros of the polynomial (8) belong to the unit circle, is determined from the following sufficient condition: there exists $z_0$ such that $P'(z_0) = 0$ with $\abs{z_0} = 1$, and all the other zeros of the polynomial $P'(z)$ lie in the disc $\overline{\mathbb D}$. Moreover,
\begin{displaymath}
P'(1) = i\pa{-(N + s + 1) - \sum_{j = 1}^{s}(-1)^j(N + s + 1 - 2j)\gamma_j},
\end{displaymath}
\begin{displaymath}
P'(-1) = \begin{cases}
\displaystyle -(N + s + 1) - \sum_{j = 1}^{s}(N + s + 1 - 2j)\gamma_j, & N - s \mbox{ is odd} \\
\displaystyle i(N + s + 1)\pa{1 + \sum_{j = 1}^{s}\gamma_j}, & N - s \mbox{ is even}
\end{cases}.
\end{displaymath}
For every $\tau \in \pa{0,\pi}$ define a set
\begin{displaymath}
U_\tau = \cb{\gamma_1, \ldots, \gamma_s \ : \ S(\tau) = 0, C(\tau) = 0,\; \mbox{Condition A is fulfilled}}.
\end{displaymath}

Let
\begin{displaymath}
U_0 = \cb{\gamma_1,\ldots,\gamma_s \ : \ -(N + s + 1) - \sum_{j = 1}^{s}(-1)^j(N + s + 1 - 2j)\gamma_j = 0,\; \mbox{Condition A is fulfilled}},
\end{displaymath}
\begin{displaymath}
U_\pi = \cb{\gamma_1,\ldots,\gamma_s \ : \ -(N + s + 1) - \sum_{j = 1}^{s}(N + s + 1 - 2j)\gamma_j = 0, \mbox{Condition A is fulfilled}},
\end{displaymath}
for odd $N - s$, and
\begin{displaymath}
U_\pi = \cb{\gamma_1,\ldots,\gamma_s \ : \ 1 + \sum_{j = 1}^{s}\gamma_j = 0,\; \mbox{Condition A is fulfilled}}
\end{displaymath}
for even $N - s$.

Then, many $\displaystyle \bigcup_{\tau \in \br{0,\pi}} \cb{U_\tau}$ form a boundary for a set in the space of coefficients $\gamma_1, \ldots, \gamma_s$ in which all zeros of polynomial \eqref{eqn8} belong to the unit circle.

For example, see below cases of when $s = 1$ and $s = 2$.
\begin{itemize}
\item $s = 1$\cite{ref1}:
\begin{displaymath}
S(t) = \sin\pa{\frac{N + 2}{2}t} - \gamma_1\sin\pa{\frac{N}{2}t}, \ C(t) = \cos\pa{\frac{N + 2}{2}t} - \frac{N}{N + 2}\gamma_1\cos\pa{\frac{N}{2}t}.
\end{displaymath}

Excluding $\gamma_1$ 
\begin{displaymath}
\begin{cases}
S(t) = 0 \\
C(t) = 0
\end{cases},
\end{displaymath}
yields $(N + 1)\sin(t) - \sin\pa{(N + 1)t} = 0$. This last equation has no solutions in the interval $\pa{0,\pi}$. Consequently, $U_\tau = \emptyset$ for $\tau \in \pa{0,\pi}$. Clearly, 
\begin{displaymath}
U_0 = \cb{\gamma_1 \ : \ \gamma_1 = \frac{N + 2}{N}}, \ U_{\pi} = \cb{\gamma_1 \ : \ \gamma_1 = -\frac{N + 2}{N}}
\end{displaymath}
for odd $N - 1$, and $\displaystyle U_{\pi} = \cb{\gamma_1 \ : \ \gamma_1 = -1}$ for even $N - 1$. Thus, a set in the coefficient space of the polynomials \eqref{eqn8} for which all zeros of the polynomial belong to the unit circle is defined by many $\displaystyle \cb{\gamma_1 \ : \ \abs{\gamma_1} \leq \frac{N + 2}{N}}$ for even $N$ and many $\displaystyle \cb{\gamma_1 \ : \ -1 \leq \gamma_1 \leq \frac{N + 2}{N}}$ for odd $N$.

\item $s = 2$:

Consider the system
\begin{displaymath}
\begin{cases}
\displaystyle \sin\pa{\frac{N + 3}{2}t} - \gamma_1\sin\pa{\frac{N + 1}{2}t} + \gamma_2\sin\pa{\frac{N - 1}{2}t} = 0 \\
\displaystyle (N + 3)\cos\pa{\frac{N + 3}{2}t} - (N + 1)\gamma_1\cos\pa{\frac{N + 1}{2}t} + (N - 1)\gamma_2\cos\pa{\frac{N - 1}{2}t} = 0
\end{cases}.
\end{displaymath}
The solution to this system can be represented as
\begin{displaymath}
\gamma_1 = \frac{(N + 1)\sin(2t) - 2\sin\pa{(N + 1)t}}{N\sin(t) - \sin(Nt)}, \ \gamma_2 = \frac{(N + 2)\sin(t) - \sin\pa{(N + 2)t}}{N\sin(t) - \sin(Nt)}.
\end{displaymath}
Then,
\begin{eqnarray*}
\bigcup_{\tau \in \pa{0,\pi}}\cb{U_\tau} &=& \left\{ \gamma_1, \gamma_2 \ : \ \gamma_1 = \frac{(N + 1)\sin(2t) - 2\sin\pa{(N + 1)t}}{N\sin(t) - \sin(Nt)}, \right. \\
& & \qquad \qquad \left. \gamma_2 = \frac{(N + 2)\sin(t) - \sin\pa{(N + 2)t}}{N\sin(t) - \sin(Nt)},\; t \in \pa{0,\pi}\right\}.
\end{eqnarray*}
We find,
\begin{displaymath}
U_0 = \cb{\gamma_1,\gamma_2 \ : \ \gamma_2 = \frac{N + 1}{N - 1}\gamma_1 - \frac{N + 3}{N - 1}, \ \gamma_1 \in \br{0,2\frac{N + 3}{N}}},
\end{displaymath}
\begin{displaymath}
U_{\pi} = \cb{\gamma_1,\gamma_2 \ : \ \gamma_2 = -\frac{N + 1}{N - 1}\gamma_1 - \frac{N + 3}{N - 1}, \ \gamma_1 \in \br{-2\frac{N + 3}{N},0}},
\end{displaymath}
when $N$ is odd, and
\begin{displaymath}
U_{\pi} = \cb{\gamma_1,\gamma_2 \ : \ \gamma_2 = -\gamma_1 - 1, \ \gamma_1 \in \br{-2\frac{N + 1}{N},\frac{2}{N}}},
\end{displaymath}
when $N$ is even.
\end{itemize}

Let us depict the boundary of the set in the plane of coefficients $\gamma_1, \ \gamma_2$ for \eqref{eqn8} (for $s = 2$), in which all the zeros of this polynomial belong to the unit circle. In Fig.~\ref{fig1}, we plot the rectangle
$$
\hat\Pi:=\left\{
 \gamma_1, \gamma_2: |\gamma_1|\le 2\frac{N+3}N,\; |\gamma_2|\le \frac{(N+3)(N+2)}{N(N-1)} 
 \right\}
$$
and the square $\tilde \Pi:=\{\gamma_1, \gamma_2: |\gamma_1|+|\gamma_2|\le 1\}$ when $s = 2$ for $N = 11$ and $N = 12$. The solid line boundaries between the square and rectangle are formed by $U_0$, $U_{\pi}$ and $\displaystyle \bigcup_{\tau \in (0,\pi)} \{U_{\tau}\}$.

\begin{figure}[h]
\begin{center}
\includegraphics[width=0.45\linewidth]{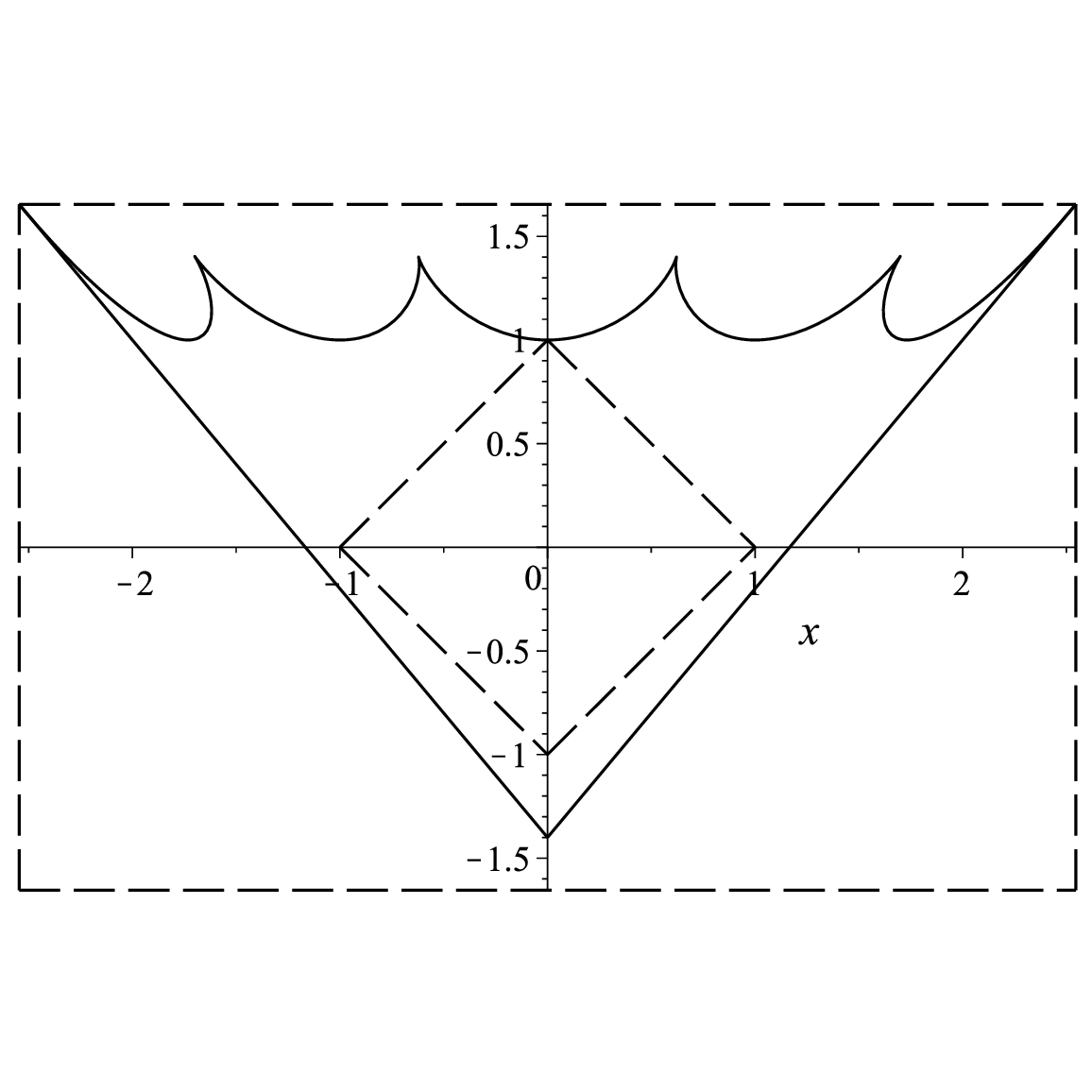}
\includegraphics[width=0.45\linewidth]{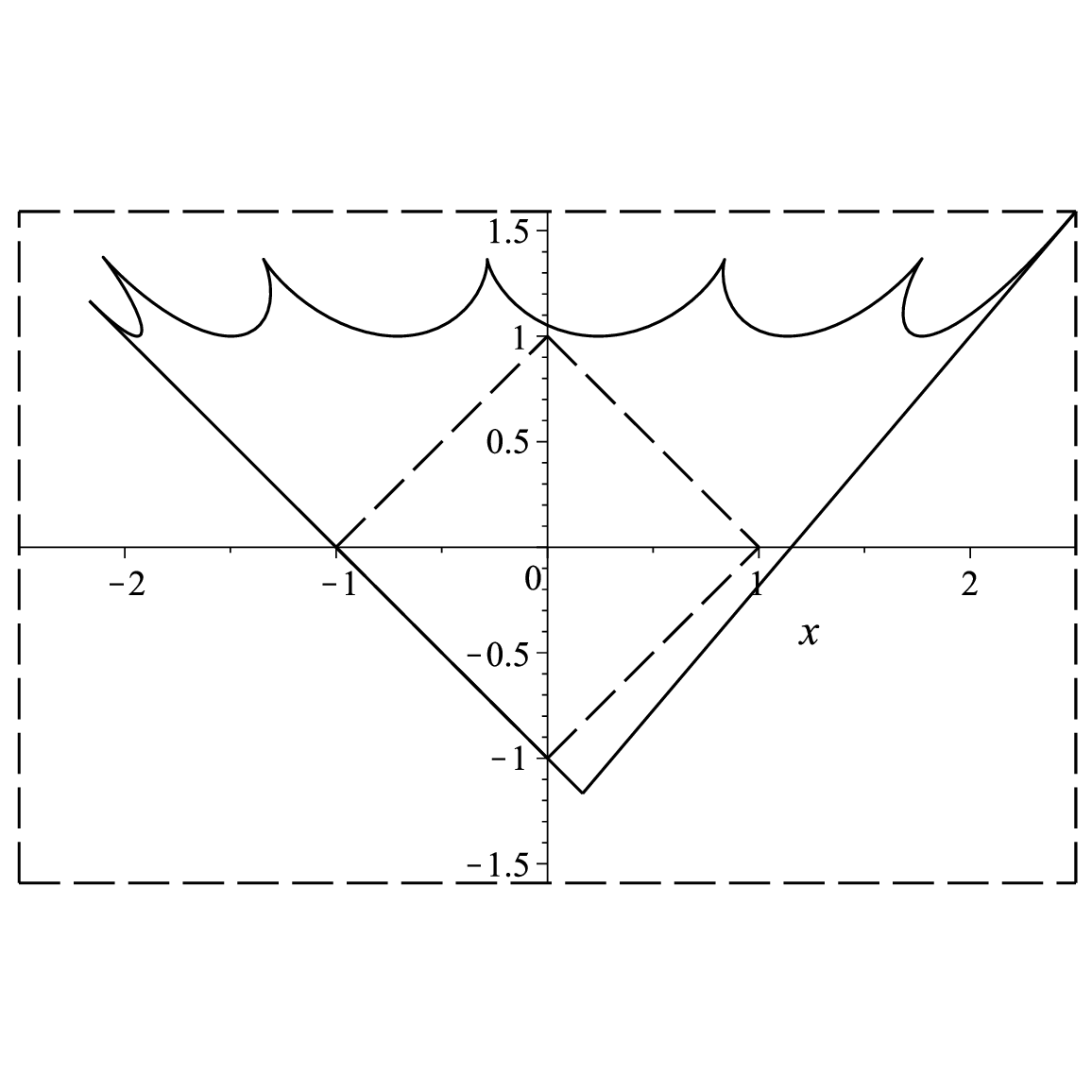}
\centerline{(a)\hspace{7cm} (b)}
\end{center}
\caption{Sets in the plane of coefficients $\gamma_1, \ \gamma_2$ for polynomial \eqref{eqn8}, in which all zeros of this polynomial belong to the unit circle for $N = 11$ (a) and $N = 12$ (b). The dashed lines shows the boundaries of the parallelepiped $\hat{\Pi}$ and cube $\tilde{\Pi}$ while the solid lines show the boundaries formed by $U_0$, $U_{\pi}$, and $\displaystyle \bigcup_{\tau \in (0,\pi)} \{U_{\tau}\}$.}\label{fig1}
\end{figure}

Note that when $N - s$ is even we have $\displaystyle P(-z) = \sum_{j = 0}^{s}\gamma_j\pa{z^j + z^{N + s + 1 - j}}$. All coefficients of an extremal polynomial are positive. The importance of studying the properties of polynomials with positive coefficients and with zeros on the unit circle was emphasized, for example, in \cite{ref3,ref15}.

\section{Representation of derivatives of the Chebyshev polynomial of the second kind in terms of Chebyshev polynomials of the second kind}

Formula \eqref{eqn6} leads to a new formula for the relationship between Chebyshev polynomials and their derivatives.

\begin{thm}\label{thm5}
The representation below is valid:
\begin{eqnarray} \label{eqn10}
U_N^{(s)}(z) &=& (-1)^s2^{-s}\frac{s!(N + s + 1)!}{(N - s)!}\frac{1}{(1 - z^2)^s} \nonumber \\
& & \qquad \cdot \sum_{j = 0}^{s}(-1)^j \frac{(N - j)!}{j!(s - j)!(N + s + 1 - j)!}U_{N + s - 2j}(z).
\end{eqnarray}
\end{thm}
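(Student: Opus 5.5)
We plan to derive the representation by substituting $z=e^{2it}$ into Theorem~\ref{thm3} and comparing both sides through the trigonometric form of $U_n$.

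\textbf{Left side.} By definition of $F_{N,s}$ and formula~\eqref{cheby-der-eqn3},
\[
F_{N,s}(z)=2^{-s}\frac{(N-s)!}{N!}z^{\frac{N-s}{2}}U_N^{(s)}(x), \qquad x=\tfrac12\pa{z^{\frac12}+z^{-\frac12}}.
\]
With $z=e^{2it}$ we get $x=\cos t$. Also $1-z=-2i\sin t\, e^{it}$, so
\[
(1-z)^{2s+1}=(-2i)^{2s+1}\sin^{2s+1}t\; e^{i(2s+1)t}.
\]
Hence the left side of \eqref{eqn6} equals
\[
2^{-s}\frac{(N-s)!}{N!}(-2i)^{2s+1}(1-x^2)^s\sin t\; e^{i(N+s+1)t}\,U_N^{(s)}(x).
\]

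\textbf{Right side.} Each bracket factors as
\[
z^j-z^{N+s+1-j}=e^{i(N+s+1)t}\pa{e^{-i(N+s+1-2j)t}-e^{i(N+s+1-2j)t}}=-2i\,e^{i(N+s+1)t}\sin\pa{(N+s+1-2j)t}.
\]
We then use $\sin((n+1)t)=\sin t\,U_n(\cos t)$ with $n=N+s-2j\ge N-s\ge0$, so no negative indices arise.

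\textbf{Comparison.} Cancelling the common factor $-2i\,e^{i(N+s+1)t}\sin t$, and using $(-2i)^{2s}=(-4)^s$, gives
\[
2^{-s}\frac{(N-s)!}{N!}(-4)^s(1-x^2)^sU_N^{(s)}(x)={N\choose s}^{-1}\sum_{j=0}^s(-1)^j{N+s+1\choose j}{N-j\choose N-s}U_{N+s-2j}(x).
\]
This holds for $x\in(-1,1)$, and therefore identically as a polynomial identity. Solving for $U_N^{(s)}$ and rewriting the binomials as factorials, the constant becomes
\[
(-1)^s2^{-s}\frac{N!}{(N-s)!}{N\choose s}^{-1}{N+s+1\choose j}{N-j\choose N-s}.
\]
Expanding the binomials, this is
\[
(-1)^s2^{-s}\,s!\,\frac{(N+s+1)!\,(N-j)!}{j!\,(N+s+1-j)!\,(s-j)!\,(N-s)!},
\]
which is exactly \eqref{eqn10}. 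It is equivalently the abstract's form $\frac{2^s}{s!}(1-z^2)^sU_N^{(s)}=(-1)^s\sum\cdots$.

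\textbf{Main obstacle.} The delicate step is tracking the constants and signs: the powers $(-2i)^{2s+1}$, the factor $2^{-s}$ in $F_{N,s}$, and the binomial normalization. We will check the result against the case $s=0$, where it reduces to $U_N=U_N$. A second check is $s=1$, using Lemma~\ref{lem2}, which gives $-\frac{1}{2}\frac{(N+2)!}{(N-1)!}\frac{1}{1-x^2}\pa{\frac{N!}{(N+2)!}U_{N+1}-\frac{(N-1)!}{N!}U_{N-1}}$. This last expression should match the known formula $(1-x^2)U_N'=\ldots$, after which we conclude.
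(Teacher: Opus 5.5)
Your proposal is correct and is essentially the paper's proof: substitute a unit-circle point into Theorem~\ref{thm3} (you use $z=e^{2it}$, $x=\cos t$, while the paper uses $z=e^{it}$, $x=\cos(t/2)$), pull out the common factor $-2i\,e^{i(N+s+1)t}\sin t$, and use $\sin((n+1)t)=\sin t\,U_n(\cos t)$; your constant bookkeeping, including $(-2i)^{2s}=(-4)^s$, is accurate. One small slip is in your $s=1$ sanity check: the $j=1$ coefficient is $\frac{(N-1)!}{(N+1)!}$, not $\frac{(N-1)!}{N!}$, and with that correction it gives $U_N'=\frac{1}{2(1-x^2)}\big((N+2)U_{N-1}-NU_{N+1}\big)$, as it should.
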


\begin{proof}
We write formula \eqref{eqn6} in the form
\begin{eqnarray*}
U_{N}^{(s)}\pa{\frac{1}{2}\pa{z^{\frac{1}{2}} + z^{-\frac{1}{2}}}} &=& 2^{s}\frac{s!(N + s + 1)!}{(N - s)!} \frac{1}{(1 - z)^{2s + 1}} \\
& & \qquad \cdot \sum_{j = 0}^{s} (-1)^j \frac{(N - j)!}{j!(s - j)!(N + s + 1 - j)!}\pa{z^{j - \frac{N}{2} + \frac{s}{2}} - z^{\frac{N}{2} + \frac{3s}{2} + 1 - j}}.
\end{eqnarray*}
Notice that
$\displaystyle (1 - z)^{2s + 1} = z^{s + \frac{1}{2}}\pa{z^{-\frac{1}{2}} - z^{\frac{1}{2}}}^{2s + 1}$. Then
\begin{eqnarray*}
U_{N}^{(s)}\pa{\frac{1}{2}\pa{z^{\frac{1}{2}} + z^{-\frac{1}{2}}}} &=& 2^{s}\frac{s!(N + s + 1)!}{(N - s)!} \frac{1}{ \pa{z^{-\frac{1}{2}} - z^{\frac{1}{2}}}^{2s}} \\
& & \qquad \cdot \sum_{j = 0}^{s} (-1)^j \frac{(N - j)!}{j!(s - j)!(N + s + 1 - j)!} \\
& & \qquad \cdot \pa{\frac{z^{j - \frac{N}{2} - \frac{s}{2} - \frac{1}{2}} - z^{\frac{N}{2} + \frac{s}{2} + \frac{1}{2} - j}}{z^{-\frac{1}{2}} - z^{\frac{1}{2}}}}.
\end{eqnarray*}
Make the substitution $z = e^{it}$ and notice that 
\begin{displaymath}
\pa{z^{-\frac{1}{2}} - z^{\frac{1}{2}}}^{2s} = (2i)^{2s}\pa{\sin^2\pa{\frac{t}{2}}}^s = (-1)^s\cdot 2^s\pa{1 - \cos^2\pa{\frac{t}{2}}}^s.
\end{displaymath}
We obtain \eqref{eqn10} when $\displaystyle z = \cos\pa{\frac{t}{2}}$. But if two polynomials coincide on a segment, then they coincide for all complex arguments. The theorem is proven.
\end{proof}

Formula \eqref{eqn10} can be written as
\begin{displaymath}
\frac{2^s}{s!}(1 - z^2)^sU_{N}^{(s)}(z) = (-1)^s \sum_{j=0}^s (-1)^j {N-j \choose N-s}{N + s + 1 \choose j}U_{N + s - 2j}(z).
\end{displaymath}
Obviously, this formula is different from formulas \eqref{cheby-der-eqn1} and \eqref{cheby-der-eqn2}.

We demonstrate for specific values of $s$ below.
\begin{enumerate}
\item $s = 1$,
\begin{displaymath}
U_N'(z) = \frac{1}{2}\frac{1}{1 - z^2}\pa{(N + 2)U_{N - 1}(z) - NU_{N + 1}(z)};
\end{displaymath}

\item $s = 2$,
\begin{eqnarray*}
U_N''(z) &=& \frac{1}{4}\frac{1}{(1 - z^2)^2}\Big((N + 3)(N + 2)U_{N - 2}(z) \\
& & - 2(N + 3)(N - 1)U_N(z) + N(N - 1)U_{N + 2}(z)\Big);
\end{eqnarray*}

\item $s = 3$,
\begin{eqnarray*}
U_N^{(3)}(z) &=& \frac{1}{8}\frac{1}{(1 - z^2)^3}\Big((N + 2)(N + 3)(N + 4)U_{N - 3}(z) \\
& & - 3(N - 2)(N + 3)(N + 4)U_{N - 1}(z) \\
& & + 3(N - 2)(N - 1)(N + 4)U_{N + 1}(z) \\
& & - (N - 2)(N - 1)NU_{N + 3}(z)\Big);
\end{eqnarray*}



\item $s = N-2$,
$$
\sum_{j = 1}^{N}(-1)^j j(j - 1){2N-1\choose N-j} U_{2j - 2}(z) = (N-1)2^{2N-3}(2Nz^2-1)(1-z^2)^{N-2}.
$$
Above the formula $\displaystyle\sum_{j=0}^n a_j=\sum_{j=0}^n a_{n-j}$ was used. 

\item $s = N-1$,
$$
\sum_{j = 1}^{N}(-1)^{j - 1} j{2N\choose N-j} U_{2j - 1}(z) = N2^{2N-1}z(1-z^2)^{N-1}.
$$

\item $s = N$,
\begin{displaymath}
\sum_{j = 0}^{N} (-1)^j{2N+1\choose N-j}U_{2j}(z) = 2^{2N}(1 - z^2)^N.
\end{displaymath}

\end{enumerate}


\begin{corollary}\label{cor:2} The following formulas are valid:\\
$\displaystyle \frac{1}{z}\sum_{j = 0}^{N} (-1)^j(j+1) {2N + 2 \choose N - j}U_{2j + 1}(z) = (2N + 2)\sum_{j = 0}^{N} (-1)^j{2N + 1 \choose N - j}U_{2j}(z)$, \\
or
$\displaystyle \sum_{j = 0}^{N}\frac{(-1)^j}{(N - j)!(N + 2 + j)!}\Big((j + 1)U_{2j + 1}(z) - (N + 2 + j)zU_{2j}(z)\Big) = 0$, \\
or
$\displaystyle \sum_{j = 0}^{N}\frac{(-1)^j}{(N - j)!(N + 2 + j)!}\Big((j + 1)U_{2j - 1}(z) + (N - j)zU_{2j}(z)\Big) = 0$.
\end{corollary}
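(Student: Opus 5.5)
The plan is to derive all three identities from the special cases $s=N$ and $s=N-1$ of Theorem~\ref{thm5}, listed right after it. The first identity comes from dividing one case by the other. The second and third are then purely algebraic rewritings: normalize the binomials, and use the three-term recurrence for $U_n$.

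\textbf{First identity.} The $s=N$ case reads
\[
\sum_{j=0}^{N}(-1)^j{2N+1\choose N-j}U_{2j}(z)=2^{2N}(1-z^2)^N .
\]
I will write the $s=N-1$ case with $N$ replaced by $N+1$:
\[
\sum_{j=1}^{N+1}(-1)^{j-1}j{2N+2\choose N+1-j}U_{2j-1}(z)=(N+1)2^{2N+1}z(1-z^2)^{N}.
\]
Shifting the index $j\to j+1$ turns the left side into $\sum_{j=0}^{N}(-1)^j(j+1){2N+2\choose N-j}U_{2j+1}(z)$. The right side equals $(2N+2)\,z\cdot 2^{2N}(1-z^2)^N$, so I substitute the $s=N$ formula for $2^{2N}(1-z^2)^N$. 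Dividing by $z$ gives the first identity; for $z=0$ it is read as a polynomial identity, since the left side is divisible by $z$.

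\textbf{Second identity.} I will expand the binomials:
\[
{2N+2\choose N-j}=\frac{(2N+2)!}{(N-j)!(N+2+j)!},\qquad (2N+2){2N+1\choose N-j}=\frac{(2N+2)!\,(N+2+j)}{(N-j)!(N+2+j)!}.
\]
I then multiply the first identity by $z$, move everything to one side and divide by $(2N+2)!$. This yields
\[
\sum_{j=0}^{N}\frac{(-1)^j}{(N-j)!(N+2+j)!}\Big((j+1)U_{2j+1}(z)-(N+2+j)zU_{2j}(z)\Big)=0 .
\]

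\textbf{Third identity.} I will apply the recurrence $U_{2j+1}=2zU_{2j}-U_{2j-1}$, valid for $j\ge 0$ with $U_{-1}=0$. The paper's convention $U_{-N}=-U_{N-2}$ at $N=1$ gives $U_{-1}=-U_{-1}$, hence $U_{-1}=0$. Each bracket then becomes
\[
(j+1)(2zU_{2j}-U_{2j-1})-(N+2+j)zU_{2j}=-\big((j+1)U_{2j-1}+(N-j)zU_{2j}\big).
\]
Changing the overall sign gives the third formula.

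There is no real obstacle. The only points needing care are the index shift when substituting $N+1$ for $N$, which makes the range $j=0,\dots,N$ match the $s=N$ case, and the convention $U_{-1}=0$ at $j=0$.
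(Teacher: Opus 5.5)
Your proposal is correct and follows the route the paper intends. The corollary comes from eliminating the common factor $2^{2N}(1-z^2)^N$ between the $s=N$ case and the $s=N-1$ case (with $N$ replaced by $N+1$) of Theorem~\ref{thm5}. The second form is then the factorial rewriting, and the third uses the recurrence $U_{2j+1}=2zU_{2j}-U_{2j-1}$ with $U_{-1}=0$. The paper gives no argument beyond placing the corollary after these special cases, so your write-up supplies the index shift and the $U_{-1}=0$ convention that the paper leaves implicit.
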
 

In \cite{ref13} the problem of constructing a non-trivial polynomial $\Psi\pa{\xi_0,\ldots,\xi_n}$ which vanishes when $\xi_j = U_j(z)$, $j = 1, \ldots, n$ is considered.
The formulas from Corollary \ref{cor:2} give a simpler solution to this problem, the desired polynomial is the sum of a linear combination of Chebyshev polynomials of the second kind with odd indices and the product of the polynomial $U_1(z)$ to a linear combination of Chebyshev polynomials of the second kind with even indices.

\bibliography{references}
\bibliographystyle{amsplain}

\end{document}